\newcommand{\junk}[1]{}
\newcommand{\formal}[1]{\ensuremath{\textsf{#1}}}
\newcommand{\dword}[1]{\textbf{#1}}
\newcommand{\clnb}[1]{\ensuremath{\formal{N}[#1]}}
\newcommand{\clnbk}[1]{\ensuremath{\formal{N}_{#1}}}
\newcommand{\graph}[1]{\ensuremath{\mathbf{#1}}}
\newcommand{\capt}{\ensuremath{\formal{capt}}}
\newcommand{\multfunct}{\ensuremath{\longrightarrow}}
\newcommand{\rfunct}[1]{\ensuremath{\mathbf{cr}(#1)}}
\newcommand{\ctime}{\ensuremath{\mathfrak{c}  }}
\newcommand{\uprank}[2]{\ensuremath{#1^{[#2 ]}}}
\newcommand{\X}{\ensuremath{\formal{V}}}
\newtheorem{defn}{Definition}[section]
\newtheorem{thm}[defn]{Theorem}
\newtheorem{cor}[defn]{Corollary}
\newtheorem{lemma}[defn]{Lemma}
\newtheorem*{convention}{Convention}
\title {Cop-Win Graphs:  Optimal Strategies and Corner Rank}
\author{David Offner\\
\small Department of Mathematics and Computer Science\\[-0.8ex]
\small Westminster College\\[-0.8ex] 
\small New Wilmington, PA, U.S.A.\\
\small\tt offnerde@westminster.edu\\
\and
Kerry Ojakian\\
\small Department of Mathematics and Computer Science\\[-0.8ex]
\small Bronx Community College (CUNY)\\[-0.8ex]
\small Bronx, NY, U.S.A\\
\small\tt kerry.ojakian@bcc.cuny.edu
}
\begin{document}
\maketitle




\begin{abstract}

We investigate the game of \emph{cops and robber}, played on a finite graph, between one cop and one robber.   If the cop can force a win on a graph,  the graph is called \emph{cop-win}.
We describe a procedure we call \emph{corner ranking}, performed on a graph, which assigns a positive integer or $\infty$ to each vertex.  
We give a characterization of cop-win in terms of corner rank 
and also show that the well-known characterization of cop-win via dismantling orderings follows from our work.
From the corner rank we can determine the capture time of a graph, i.e.
the number of turns the cop needs to win. 
We describe a class of optimal cop strategies we call \emph{Lower Way} strategies, and a class of optimal robber strategies we call \emph{Higher Way} strategies.
Roughly speaking, in a Lower Way strategy, the cop pushes the robber down to lower ranked vertices, while in a Higher Way strategy, the robber moves to a highest rank vertex that is ``safe.''
While interesting in their own right, the strategies are themselves tools in our proofs.  
We investigate various properties of the Lower Way strategies.

\end{abstract}

\section{Introduction}

The game of cops and robber is a perfect-information two-player pursuit-evasion game played on a  graph.  To begin the game, the cop and robber each choose a vertex to occupy, with the cop choosing first.  Play then alternates between the cop and the robber, with the cop moving first. On a turn a player may move to an adjacent vertex or stay still.  If the cop and robber ever occupy the same vertex, the robber is caught and the cop wins.  If the cop can force a win on a graph, we say the graph is \emph{cop-win}. The game was introduced by Nowakowski and Winkler \cite{NW83}, and Quilliot \cite{Qui78}. A nice introduction to the game and its many variants is found in 
the book by Bonato and Nowakowski \cite{BN11}. 

For a cop-win graph, the cop can guarantee a win, so
Bonato et. al.
 \cite{BGHK2009}
raise the question of how long the game will last.
They define the \dword{capture time} of a cop-win graph \graph{G}, denoted $\capt(\graph{G})$, to mean the fewest number of moves \emph{the cop} needs to guarantee a win (\emph{not counting her initial placement as a move}); for example, on the path with five vertices the capture time is two. 
For cop-win graphs, we define  what we mean for a player's strategy to be optimal.

\begin{defn} Consider a cop-win graph with capture time
\ctime.
\begin{itemize}

\item A cop strategy is \dword{optimal} if no matter what the 
robber does, the cop will win in at most \ctime \ cop moves.

\item A robber strategy is \dword{optimal} if no matter what the  cop does, the 
cop must make at least \ctime \ moves to win.

\end{itemize}

\end{defn}

Our paper is centered on the development of optimal strategies which are defined in terms of \emph{corner rank}.
In Sections \ref{sec_corner_ranking} and \ref{sec_projections},
we introduce corner rank and the associated notion
of projections.  On any graph, we define a corner rank function which
assigns to each vertex a positive integer or $\infty$. The corner rank (or simply rank) of a vertex is the value assigned to it by the corner rank function.
The corner rank of a graph is equal to the rank of its highest rank vertex.
In Section~\ref{LowWay} we define a class of cop strategies we call \emph{Lower Way} strategies;
roughly, these strategies corner the robber among the vertices of at least
a certain rank, forcing the robber to move to successively 
lower rank vertices until he is caught.
In Section~\ref{OneStep} we define a class of robber strategies we call \emph{Higher Way};
in such a strategy, the robber moves to a highest corner rank vertex
he can while maintaining a condition guaranteeing his safety.

We say a graph with a corner rank that is a positive integer has
\emph{finite corner rank}, while a graph with a corner rank of $\infty$ has \emph{infinite corner rank}.
If the corner rank of a graph is finite then we can easily determine a number
\ctime \ such that  a Lower Way strategy will win in at most \ctime \ cop moves.
Conversely, the Higher Way strategy forces the cop to make at least \ctime \ moves.
Thus we can conclude that the capture time is \ctime \ and that both the 
Lower Way strategies and Higher Way strategies are optimal.
While a Higher Way strategy for the robber on a finite corner rank graph delays his inevitable loss
as long as possible, on an infinite corner rank graph, the strategy actually wins.
Thus corner rank provides a characterization of cop-win graphs: A graph is cop-win if and only
if it has finite corner rank.

Finally, in Section~\ref{sec_final_discussion}, we show that in general the Lower Way strategies do not encompass  all optimal strategies. 
However for a particular class of graphs that we call 
\emph{iteratively twin-free},   if the robber plays a Higher Way strategy, then all optimal strategies for the cop are Lower Way strategies.
We raise the question of ``characterizing all the optimal strategies.''

Many of the results in this paper, though obtained independently, are explicitly or implicitly contained in a paper of Clarke, Finbow, and MacGillivary \cite{CNG14}.  
The fundamental difference between our papers is that all our results follow from
the properties of the Lower Way and Higher Way strategies, while their results
follow from a combination of the following tools:
1) A cop strategy, 2) inductive arguments on the corner rank, and 3) the well-known
characterization of cop-win graphs by dismantling orderings.  We do not claim
that our approach is better.  However, our approach does have the advantage
of providing explicit optimal strategies for both players, and we prove some interesting properties of these strategies.  In addition, in Section~\ref{sec_threeCors} we use our tools to
give an alternate proof of the characterization of the cop-win graphs as exactly those that have a dismantling ordering \cite{NW83, Qui78}.
At various points throughout the paper, we will compare our approach with that of \cite{CNG14}.  

The notion of corner rank provides a useful tool for investigating cop-win graphs.  We already see the usefulness in this paper.  We also have a submitted paper \cite{OO16} in which we use corner rank to characterize graphs of a given size with maximal capture time.  We expect corner rank to continue to be a useful tool for future questions about cop-win graphs.

\section{Corner Ranking} \label{sec_corner_ranking}

One of the fundamental results about the game of Cops and Robber is a characterization of the cop-win graphs as those graphs which have a \emph{dismantling ordering} (also known as a \emph{cop-win ordering}--see \cite{NW83} and \cite{Qui78}).  
In this section we define corner ranking, a procedure that can be thought of as a modification of the cop-win ordering.  In \cite{CNG14}, the authors describe \emph{cop-win partitions}, which are equivalent to corner ranking when restricted to cop-win graphs; at the end of this section we compare our approaches.

In this paper all graphs are finite and non-empty, i.e. have at least one vertex; all numbers are integers.
We follow a typical Cops and Robber convention: we assume that all graphs are reflexive, that is \emph{all graphs have a loop at every 
vertex so that a vertex is adjacent to itself}; we will never draw or mention such edges. This assumption simplifies much of the following discussion while leaving the game play unchanged. 
For a graph \graph{G}, $V(\graph{G})$ refers to the vertices of \graph{G}.
If \graph{G} is a graph and $X$ is a vertex or set of vertices in \graph{G},
then by $\graph{G} - X$ we mean the subgraph of \graph{G} induced by $V(\graph{G})\setminus X$. We say that a vertex $u$ \dword{dominates} a set of vertices $X$ if 
$u$ is adjacent to every vertex in $X$.
Given a vertex $v$ in a graph, by \clnb{v}, the \dword{closed neighborhood of v}, we mean the set of vertices adjacent to $v$. 
For a vertex $v$ in \graph{G} and an induced  subgraph \graph{H} of \graph{G},
let $\graph{H} \cup v$ be the subgraph of \graph{G} induced by
$V(\graph{H}) \cup \{ v \}$.
For distinct vertices $v$ and $w$, if $\clnb{v} \subseteq \clnb{w}$ then
we say that $v$ is a \dword{corner} and that $w$ \dword{corners} $v$.

\begin{convention}
We will commonly make reference to ``the cop'' and ``the robber'', to
refer to the vertices at which they are located; for example, to say that
the cop corners the robber, means that the cop is located at some 
vertex $c$, the robber at some vertex $r$, and $c$ corners $r$.
\end{convention}

It is well known that  for a graph to be cop-win, it must contain a corner, and furthermore,  if $v$ is a corner in a graph \graph{G}, then  \graph{G} is cop-win if and only if $\graph{G} - v$ is cop-win. 
Thus a graph is cop-win if and only if the vertices can be deleted one by one, where each vertex deleted is a corner at the time of deletion. The ordered list of these vertices is called a \dword{cop-win ordering} or 
\dword{dismantling ordering} \cite{NW83, Qui78}; 
for example, in the graph of Figure~\ref{cranex1}, the following is a dismantling ordering, where we delete vertices from left to right:
$$  a, d, e, g, h, f, c, b.$$
There may be many cop-win orderings for a given graph.
To build upon this work, we classify corners into two categories, which we call 
\emph{strict corners} and \emph{twins}. 

\begin{defn}
Suppose $v$ and $w$ are distinct vertices where $w$ corners $v$. 
\begin{itemize}
\item If $\clnb{v} \subsetneq \clnb{w}$, we say that $v$ is a \dword{strict corner} and that $w$ \dword{strictly corners} $v$.
\item If $\clnb{v} = \clnb{w}$, we call $v$ and $w$ \dword{twins}.

\end{itemize}
\end{defn}
\noindent
Note that if $v$ and $w$ are twins then $v$ corners $w$ and $w$ corners $v$.  If a vertex is a corner, it must either be a strict corner or a twin with some other vertex; it can also be both.
For example, in Figure~\ref{cranex1}, vertex $a$ is a strict corner but not a twin 
with any vertex.  Vertices $c$, $d$, and $e$ are pair-wise twins, but not strict corners. Vertices $g$ and $h$ are both strict corners and twins with one another.

\begin{defn}
In a graph, a \dword{path of strict corners} is a path $(v_1, \ldots, v_k)$ containing at least one vertex such 
that each $v_i$ is a strict corner and $v_{i+1}$ strictly corners $v_i$.
\end{defn}

\noindent
If $v$ is a strict corner, then the trivial path $(v)$ counts as a path of strict corners.
Also note that if $x$ is strictly cornered by $y$ and $y$ is strictly cornered by 
$z$, then $x$ is strictly cornered by $z$; thus in a path of strict corners $(v_1, \ldots, v_k)$,
if $i < j$ then $v_i$ is strictly cornered by $v_j$.

\begin{lemma}\label{noncorndom}
In a graph, any strict corner $v$  is strictly cornered by some vertex $w$, where $w$ is \emph{not} a strict corner.
\end{lemma}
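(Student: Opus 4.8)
The plan is to use an extremal argument that exploits the finiteness of the graph together with the transitivity of strict cornering noted just before the statement. First I would consider the set $S$ of all vertices that strictly corner $v$. Since $v$ is a strict corner, some vertex strictly corners it, so $S$ is nonempty. Because the graph is finite, I can then choose a vertex $w \in S$ whose closed neighborhood $\clnb{w}$ is maximal with respect to inclusion among $\{\clnb{u} : u \in S\}$ (equivalently, a $w \in S$ of maximum $|\clnb{w}|$). I claim this $w$ is the vertex the lemma asks for: it strictly corners $v$ simply by membership in $S$, and it remains to show it is not itself a strict corner.

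To establish that $w$ is not a strict corner, I would argue by contradiction. Suppose $w$ is a strict corner. Then some vertex $z$ strictly corners $w$, i.e. $\clnb{w} \subsetneq \clnb{z}$. Now I invoke the transitivity already recorded in the excerpt: since $v$ is strictly cornered by $w$ and $w$ is strictly cornered by $z$, the vertex $v$ is strictly cornered by $z$, so $z \in S$. But $\clnb{w} \subsetneq \clnb{z}$ contradicts the maximality of $\clnb{w}$ among the closed neighborhoods of members of $S$. Hence no such $z$ exists, so $w$ is not a strict corner, and the proof is complete.

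There is essentially no hard step here; the whole argument rests on the two ingredients supplied before the statement, namely the transitivity of the strict-cornering relation and the finiteness of the graph. Together these forbid an infinite strictly ascending chain $\clnb{v} \subsetneq \clnb{w_1} \subsetneq \clnb{w_2} \subsetneq \cdots$, which is exactly what drives the extremal choice. An equivalent constructive phrasing would start from any $w_1$ strictly cornering $v$ and, as long as the current vertex $w_i$ is still a strict corner, replace it by a vertex $w_{i+1}$ strictly cornering $w_i$; each such step strictly enlarges the closed neighborhood, so finiteness forces the process to terminate, and by the definition of strict corner it can only terminate at a vertex that is not a strict corner. The one point requiring care is applying transitivity correctly at each stage, so that the terminal vertex is guaranteed to still strictly corner the original $v$ rather than merely strictly cornering some intermediate $w_i$.
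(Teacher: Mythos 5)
Your proof is correct, and it takes a somewhat different route from the paper's, though both run on the same engine of finiteness plus transitivity of strict cornering. The paper uses the notion of a \emph{path of strict corners} defined just before the lemma: it takes a maximal such path $(v_1,\ldots,v_k)$ with $v_1=v$, lets $w$ be any vertex strictly cornering $v_k$, argues via asymmetry (since $v_k$ strictly corners each earlier $v_i$, no $v_i$ corners $v_k$) that $w$ lies outside the path, concludes from maximality of the path that $w$ cannot be a strict corner, and finally applies transitivity to get that $w$ strictly corners $v$. Your argument replaces the maximal path by a single extremal element: a vertex $w$ in the set $S$ of vertices strictly cornering $v$, chosen so that $\clnb{w}$ is maximal under inclusion. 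This buys a small simplification: if some $z$ strictly cornered $w$, then $z\in S$ by transitivity and $\clnb{w}\subsetneq\clnb{z}$ contradicts maximality immediately, so you never need the paper's separate distinctness step ensuring the new vertex lies outside the structure built so far. (One pedantic note: maximality under inclusion and maximum cardinality of $\clnb{w}$ are not equivalent conditions in general, but either one works here, so nothing breaks.) Your closing ``constructive phrasing''---iteratively replacing $w_i$ by some $w_{i+1}$ strictly cornering it until the process stops---is essentially the paper's maximal-path proof in disguise, including the caveat you correctly flag: transitivity must be invoked so that the terminal vertex strictly corners the original $v$ and not merely the previous $w_i$.
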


\begin{proof}
Let $v$ be a strict corner and let $(v_1, \ldots, v_k)$ be a maximal path of strict corners
such that $v_1 = v$.  As observed above, $v_k$ strictly corners $v_i$ if $i < k$. Thus for all $i<k$, $v_k$ has at least one neighbor not adjacent to $v_i$, and so $v_i$ does not corner $v_k$.  
Since $v_k$ is a strict corner, it must be strictly cornered by some vertex $w \notin \{v_1, \ldots, v_k \}$.  By the maximality of the path, $w$ is \emph{not} a strict corner.
Since $v$ is strictly cornered by $v_k$ and $v_k$ is strictly cornered by $w$, $v$ is strictly cornered by $w$, which is not a strict corner.
\end{proof}

We now define the corning ranking procedure, which can be thought of as an alternative to cop-win ordering where all strict corners are deleted at once in each step. 
Every successive group of strict corners is assigned the next positive integer rank.  The
procedure stops when it arrives at a graph with no strict corners.  If it terminates with a clique, the vertices are assigned the next positive integer; otherwise the remaining vertices are assigned the value $\infty$.

\begin{defn} [{\bf Corner Ranking Procedure}]
\label{def_corner_ranking}

For any graph \graph{G},
we define a corresponding  \dword{corner rank} function, $\mathbf{cr}$, which maps each vertex of \graph{G} to a positive integer or $\infty$.
We also define a sequence of associated graphs 
$\uprank{\graph{G}}{1}, \ldots, \uprank{\graph{G}}{\alpha}$.

\begin{enumerate}
\setcounter{enumi}{-1}

\item Initialize $\uprank{\graph{G}}{1} =\graph{G}$, and $k = 1$.

\item 
\label{alg_start}
If \uprank{\graph{G}}{k} is a clique, then:
\begin{itemize}

\item
 Let $\rfunct{x} = k$ for all $x \in  \uprank{\graph{G}}{k}$.

\item
Then  
{\bf stop}. 

\end{itemize}

\item Else if \uprank{\graph{G}}{k}
is \emph{not} a clique and has {\bf no} strict corners,  then:
\begin{itemize}

\item
Let $\rfunct{x} = \infty$ for all $x \in \uprank{\graph{G}}{k}$.

\item 
Then {\bf stop}. 

\end{itemize}

\item Else:  
\begin{itemize}
\item Let $\X$ be the set of strict corners in \uprank{\graph{G}}{k}.

\item For all $x \in \X$, let  $\rfunct{x} = k$.  
\item Let 
$\uprank{\graph{G}}{k+1} = \uprank{\graph{G}}{k} - \X$

\item Increment $k$ by 1 and return to Step~\ref{alg_start}.
\end{itemize}
\end{enumerate}
Define the \dword{corner rank} of \graph{G}, denoted \rfunct{\graph{G}}, to be the same as the vertex of \graph{G}
with largest corner rank; we understand $\infty$ to be larger than all integers.

\end{defn}

\noindent
Lemma~\ref{noncorndom} implies that every graph that contains a strict corner also contains at least one vertex that is \emph{not} a strict corner, so every graph \uprank{\graph{G}}{k}
produced in the corner ranking procedure will be nonempty.

As an example, we apply the corner ranking procedure to the graph in Figure~\ref{cranex1}.
The procedure begins by assigning the 
strict corners $a$, $g$, and $h$ corner rank 1.
After these vertices are removed, $b$ and $f$ are strict corners in 
\uprank{\graph{G}}{2},
and are thus assigned corner rank 2.
After these are removed, the remaining vertices, $c$, $d$, and $e$,
form a clique and so are assigned corner rank 3; thus the corner rank of the graph is 3.   
As another example, consider Figure~\ref{cranex2}. While $\rfunct{x} = 1$  and $\rfunct{y} = 2$, once $x$ and $y$ have been removed there are 
no strict corners, and what remains is not a clique,
so the  other 5 vertices have corner rank $\infty$;  thus the graph
has corner rank $\infty$. We will prove that a graph is cop-win if and only if it has a finite corner rank.  For example, the graph in Figure~\ref{cranex1} has finite rank and is cop-win, while the graph in Figure~\ref{cranex2} has infinite rank and is not cop-win.
\emph{Unless we explicitly state that a graph has finite corner rank, we assume that its corner rank may be infinite.}

\begin{figure}
\begin{center}
  \begin{tikzpicture}[every node/.style={circle,fill=black!20}]  
      \node (n1) at (0,0) [label=above:$a$] {$1$};
      \node (n2) at (1,0) [label=above:$b$] {$2$};
      \node (n31) at (3,-1) [label=left:$e$] {$3$};
      \node (n32) at (2,0) [label=above:$c$] {$3$};
      \node (n33) at (3,1) [label=above:$d$] {$3$};
      \node (n4) at (4,0) [label=above:$f$]  {$2$};
      \node (n51) at (5,-.5) [label=right:$h$]  {$1$};
      \node (n52) at (5,.5) [label=right:$g$] {$1$};
    \foreach \from/\to in {n1/n2, n2/n31, n2/n32, n2/n33, n31/n32, n32/n33, n31/n33, n31/n4, n32/n4, n33/n4, n4/n51, n4/n52, n51/n52}
    \draw (\from) -- (\to);
    \end{tikzpicture}
\end{center}
\caption{The corner ranking of a cop-win graph.}\label{cranex1}
\end{figure}

\begin{figure}
\begin{center}
  \begin{tikzpicture}[every node/.style={circle,fill=black!20}]  
      \node (n3) at (0,0)  {$\infty$};
      \node (n4) at (0,2)  {$\infty$};
      \node (n5) at (2,2)  {$\infty$};
      \node (n2) at (2,0)  {$\infty$};
      \node (n1) at (4,1)  {$\infty$};
      \node (y) at (6,1) [label=above:$y$]  {$2$};
      \node (x) at (8,1) [label=above:$x$]  {$1$};
    \foreach \from/\to in {n1/n2, n2/n3, n3/n4, n4/n5, n5/n1, n1/y, y/x}
    \draw (\from) -- (\to);
    \end{tikzpicture}
\end{center}
\caption{The corner ranking of a non cop-win graph.}\label{cranex2}
\end{figure}

A graph with rank one is either a single vertex, with capture time 0, or a clique on two or more vertices, with capture time 1.
Since the optimal strategy and capture time are clear in these cases, for the rest of the paper we make the following convention.
\begin{convention}
We assume all finite rank graphs have corner rank at least 2.
\end{convention}

For cop-win graphs, we will see that the strategy and capture time depend on a structural  
property of the highest ranked vertices.

\begin{defn} \label{defn_topheavy}
Suppose \graph{G} is a graph with a \emph{finite} corner rank $\alpha$.
We say that \graph{G} is a \dword{$1$-cop-win graph}
if one of the two equivalent conditions holds:

\begin{itemize}
\item 
Some vertex of corner rank $\alpha$ dominates 
$V(\uprank{\graph{G}}{\alpha - 1})$.

\item 
Every vertex of corner rank $\alpha$ dominates 
$V(\uprank{\graph{G}}{\alpha - 1})$.

\end{itemize}
\noindent
Otherwise we say \graph{G} is a  \dword{$0$-cop-win graph}.

\end{defn}

\noindent
For example, the graph in Figure~\ref{cranex1} is $1$-cop-win,
since any of the three vertices of rank 3 ($e,d,$ and $c$) are adjacent to all the rank 3 vertices and all the vertices of rank 2 ($b$ and $f$).  The graph in Figure~\ref{lowmid} is $0$-cop-win.
\emph{We remark that at this point we have not proven the being $r$-cop-win for some $r$ implies being cop-win; this will be shown in Corollary~\ref{cor_capture_upper}, thus justifying the names for our terminology.}
We now show that the two conditions in Definition~\ref{defn_topheavy} are actually equivalent.
\begin{lemma} \label{allvertsxa}
For a graph with finite corner rank $\alpha$, 
the two conditions in Definition~\ref{defn_topheavy} are equivalent.
\end{lemma}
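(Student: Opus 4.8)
The plan is to first pin down what the two conditions actually say. The vertices of corner rank $\alpha$ are precisely the vertices of $\uprank{\graph{G}}{\alpha}$, which is a nonempty clique (the procedure halts at step $\alpha$ because this graph is a clique), and by our convention $\alpha \geq 2$, so $\uprank{\graph{G}}{\alpha-1}$ is a genuine graph arising in the procedure. Crucially, the strict corners of $\uprank{\graph{G}}{\alpha-1}$ are exactly the vertices that receive rank $\alpha-1$, so the vertices of rank $\alpha$ are precisely the vertices of $\uprank{\graph{G}}{\alpha-1}$ that are \emph{not} strict corners of $\uprank{\graph{G}}{\alpha-1}$. With this in hand, the implication ``every $\Rightarrow$ some'' is immediate, since there is at least one vertex of rank $\alpha$; so all the work lies in the converse.

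For the direction ``some $\Rightarrow$ every'', I would suppose $u$ is a vertex of rank $\alpha$ that dominates $V(\uprank{\graph{G}}{\alpha-1})$. Since adjacency is unchanged by passing to an induced subgraph, this simply says $u$ is a universal vertex of $\uprank{\graph{G}}{\alpha-1}$, i.e. $\clnb{u} \cap V(\uprank{\graph{G}}{\alpha-1}) = V(\uprank{\graph{G}}{\alpha-1})$. Let $w$ be any other vertex of rank $\alpha$. Because $u$ is universal in $\uprank{\graph{G}}{\alpha-1}$, the closed neighborhood of $w$ within $\uprank{\graph{G}}{\alpha-1}$ is automatically contained in that of $u$. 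I would then invoke the structural fact noted above: a vertex of rank $\alpha$ is not a strict corner of $\uprank{\graph{G}}{\alpha-1}$. Hence this containment cannot be strict, for otherwise $u$ would strictly corner $w$ and $w$ would be a strict corner of $\uprank{\graph{G}}{\alpha-1}$, a contradiction. Therefore the two closed neighborhoods are equal, $w$ is also universal in $\uprank{\graph{G}}{\alpha-1}$, and so $w$ dominates $V(\uprank{\graph{G}}{\alpha-1})$.

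The only point requiring care, and the single place the argument could slip, is keeping track of the ambient graph: both ``dominates'' and ``strict corner'' must be read inside the induced subgraph $\uprank{\graph{G}}{\alpha-1}$, and I would state explicitly that restricting to an induced subgraph does not change adjacencies among its vertices. Beyond this bookkeeping I expect no real obstacle, since the heart of the matter is the elementary dichotomy for corners — a vertex whose closed neighborhood (computed in $\uprank{\graph{G}}{\alpha-1}$) sits inside that of a universal vertex either has the same neighborhood, making it a twin and hence also universal, or a strictly smaller one, making it a strict corner — combined with the fact that rank-$\alpha$ vertices are exactly the non-strict-corners of $\uprank{\graph{G}}{\alpha-1}$.
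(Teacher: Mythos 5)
Your proposal is correct and follows essentially the same route as the paper's proof: both reduce the ``some $\Rightarrow$ every'' direction to the observation that a rank-$\alpha$ vertex cannot be strictly cornered in $\uprank{\graph{G}}{\alpha-1}$ (since strict corners there receive rank $\alpha-1$), so a universal rank-$\alpha$ vertex forces every other rank-$\alpha$ vertex to be its twin, hence also universal. The paper merely phrases this as a direct contradiction, while you make the containment/equality dichotomy and the induced-subgraph bookkeeping explicit.
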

\begin{proof}

Let \graph{G} be a graph with finite corner rank $\alpha$.  
It suffices to show that if there is some vertex $x$ of corner rank $\alpha$ dominating 
$V(\uprank{\graph{G}}{\alpha-1})$,
then every vertex of rank $\alpha$ dominates
$V(\uprank{\graph{G}}{\alpha - 1})$. 
Assume for contradiction that $y$ is a vertex of rank $\alpha$ that does not dominate 
$V(\uprank{\graph{G}}{\alpha - 1})$.
Then $x$ strictly corners $y$ in 
\uprank{\graph{G}}{\alpha - 1}, 
contradicting the assumption that $\rfunct{y}=\alpha$.
\end{proof}

In \cite{CNG14}, corner ranking is also defined for cop-win graphs; they call it a \emph{cop-win partition}.
One difference is that we define vertices with corner rank $\infty$.
In Section~\ref{OneStep}, we show that the robber's Higher Way strategy allows him to safely remain among the corner rank $\infty$ vertices indefinitely.  Another difference is that 
our procedure appears to be simpler.
We simply remove all the strict corners at each step.  In \cite{CNG14}, they essentially carry out two actions in each step: First they collapse any vertices that are currently twins to a single vertex, and then they remove all the corners.

\section{Projections}
\label{sec_projections}

A key technical point will be to deal with projections of cop or robber locations rather than their actual locations. First,
we introduce some definitions that make our treatment of finite and infinite corner rank graphs
uniform.

\begin{defn}
Suppose we are considering a graph with corner rank $\infty$.

\begin{itemize}

\item For any finite corner rank $k$ exhibited by this graph, let $k < \infty$.

\item
If there is a vertex of finite corner rank, then there is a largest finite corner rank, say $\gamma$.
In this case, we let $\infty - 1$ refer to $\gamma$
and let $\gamma + 1$ refer to $\infty$.

\end{itemize}

\end{defn}

\noindent
For example, if a graph has vertices with corner ranks $1,2,$ and $\infty$, and nothing else, then
$1, 2 < \infty$.  Furthermore, $2+1 = \infty$ and $\infty -1 = 2$.

Now we define the projection functions, relative to the corner ranking. A similar definition appears in \cite{CNG14}, which is in turn a modification of definitions
from \cite{CN2001}.
We write $f:\graph{H} \multfunct \graph{G}$ to mean that $f$ is a function whose domain is the {\bf non-empty subsets} of $V(\graph{H})$ and whose codomain is the {\bf non-empty subsets} of $V(\graph{G})$.


\begin{defn}
Suppose \graph{G} is a graph with corner rank $\alpha$.
We define the functions $f_1, \ldots, f_{\alpha -1}$ and 
$F_1, \ldots, F_{\alpha-1}, F_{\alpha}$,
where
$f_k: \uprank{\graph{G}}{k}  \multfunct \uprank{\graph{G}}{k+1}$ and
 $F_k:\graph{G} \multfunct \uprank{\graph{G}}{k}$.

\begin{itemize}

\item
For a single vertex $u \in V(\uprank{\graph{G}}{k})$, define:
$$f_k(\{u\}) = 
\begin{cases}
\{u\} & \hbox{if $\rfunct{u} > k$} \\
\hbox{the set of vertices in \uprank{\graph{G}}{k+1} 
that strictly corner $u$ in \uprank{\graph{G}}{k} } & \hbox{\bf otherwise.}
\end{cases}
$$

\item $f_k(\{u_1, \ldots, u_t\}) = \bigcup\limits_{1\le i \le t}f_k(u_i)$

\item If \graph{G} has some finite rank vertex, then: 
\begin{itemize}

\item
Let $F_1:  \graph{G} \multfunct  \graph{G}$ be the identity function,  and

\item
For $1 < k \le \infty$, let $F_k = f_{k-1} \circ \cdots \circ f_1$.

\end{itemize}

\item If \graph{G} has no finite rank vertices, then
let $F_{\infty}:  \graph{G} \multfunct  \graph{G}$ be the identity function.

\end{itemize}

\end{defn}
\noindent
For a function $h$ whose domain is sets of vertices, we adopt the usual convention that $h(u) = h(\{ u \})$ for a single vertex $u$. We remark that by
by Lemma~\ref{noncorndom} the functions $f_k$ are guaranteed to have
\emph{non-empty} sets for values.
We say $v$ is a \dword{$k$-projection} (or simply a \dword{projection}) of $w$ if $v \in F_k(w)$. 

\begin{defn}
Given two graphs \graph{H} and \graph{G}, and a function $h: V(\graph{H}) \multfunct V(\graph{G})$, we say that $h$ is a \dword{homomorphism} if for vertices $u, v \in V(\graph{H})$ and vertices $u^* \in h(u), v^* \in h(v)$:
\[
u \hbox{ is adjacent to } v \ \hbox{implies} \ u^* \hbox{ is adjacent to } v^*.
\]

\end{defn}

\begin{lemma}  
Given a graph \graph{G}, its  associated functions $f_k$ and $F_k$ 
are homomorphisms.
\end{lemma}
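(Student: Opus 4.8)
The plan is to prove that each $f_k$ is a homomorphism first, and then deduce the result for each $F_k$ by composition. The key observation driving the whole argument is the definition of a corner: if $v$ is strictly cornered by $w$, then $\clnb{v} \subsetneq \clnb{w}$, which means every neighbor of $v$ is also a neighbor of $w$. Projections replace a vertex either by itself (if its rank exceeds $k$) or by the vertices that strictly corner it in $\uprank{\graph{G}}{k}$, so adjacency should be preserved precisely because the cornering vertices inherit all the adjacencies of the vertices they corner.

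First I would reduce to the single-vertex level. Since $f_k$ on a general set is defined as the union of its values on singletons, and the homomorphism condition only ever inspects $u^* \in f_k(u)$ and $v^* \in f_k(v)$ for individual vertices $u, v$, it suffices to verify the condition under the assumption that $u$ and $v$ are adjacent vertices of $\uprank{\graph{G}}{k}$. I would then do a case analysis on the ranks of $u$ and $v$ relative to $k$. If both $\rfunct{u} > k$ and $\rfunct{v} > k$, then $f_k(u) = \{u\}$ and $f_k(v) = \{v\}$, so $u^* = u$ and $v^* = v$ and adjacency is immediate. If $\rfunct{u} = k$ but $\rfunct{v} > k$, then $v^* = v$, and $u^*$ is some vertex strictly cornering $u$ in $\uprank{\graph{G}}{k}$; since $u$ is adjacent to $v$, we have $v \in \clnb{u} \subseteq \clnb{u^*}$, so $u^*$ is adjacent to $v = v^*$. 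The symmetric case is handled identically. The remaining case is $\rfunct{u} = k$ and $\rfunck{v} = k$, where both get replaced by cornering vertices; here I would chain the two containments: $u^*$ strictly corners $u$ and $v^*$ strictly corners $v$, and since $u$ is adjacent to $v$, I get $v \in \clnb{u} \subseteq \clnb{u^*}$, so $u^*$ is adjacent to $v$, hence $v \in \clnb{u^*} \subseteq$ (nothing needed yet); then using that $v^*$ corners $v$ I get $u^* \in \clnb{v} \subseteq \clnb{v^*}$, so $u^*$ is adjacent to $v^*$. The one subtlety is making sure the cornering relation is being read in $\uprank{\graph{G}}{k}$, the induced subgraph, not in $\graph{G}$; I would be careful to note that $u^*$ and $v^*$ both lie in $V(\uprank{\graph{G}}{k})$ and that closed neighborhoods are taken within that subgraph, so all adjacencies in question are edges of $\uprank{\graph{G}}{k}$ and hence of $\graph{G}$.

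Having shown each $f_k$ is a homomorphism, I would establish that a composition of homomorphisms is a homomorphism, which is a routine chaining argument: if $g$ and $h$ are homomorphisms and $u$ is adjacent to $v$, then for any $u^* \in g(u)$, $v^* \in g(v)$ adjacency is preserved, and then for any $u^{**} \in h(u^*)$, $v^{**} \in h(v^*)$ it is preserved again, which is exactly the homomorphism condition for $h \circ g$ unwound through the union-over-singletons definition. Since $F_1$ is the identity (trivially a homomorphism) and $F_k = f_{k-1} \circ \cdots \circ f_1$ for $k > 1$, the result for all $F_k$ follows. The case of a graph with no finite rank vertices, where $F_\infty$ is the identity, is immediate.

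The main obstacle, such as it is, is the innermost case where both endpoints are being projected simultaneously: one has to apply the cornering containment twice and be attentive to which neighborhood chain is invoked, since the conclusion $u^*$ adjacent to $v^*$ does not follow from a single application of $\clnb{v} \subseteq \clnb{v^*}$ but requires first passing through $u^*$ being adjacent to $v$. Everything else is bookkeeping around the union-over-singletons definition and the passage to induced subgraphs, so I expect the proof to be short once the case analysis is organized carefully.
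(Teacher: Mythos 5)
Your proposal is correct and takes essentially the same route as the paper: prove each $f_k$ is a homomorphism via the same case analysis (both endpoints of rank $>k$, exactly one projected, both projected, with the same two-step chaining---first $u^*$ adjacent to $v$, then $v^*$ corners $v$---in the final case), and then obtain the $F_k$ by composition. The one point the paper makes explicit that you leave implicit is the degenerate case $u^* = v^*$, where the required adjacency is a loop; this holds by the standing reflexivity convention, and your closed-neighborhood containment $u^* \in \clnb{v} \subseteq \clnb{v^*}$ absorbs it automatically.
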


\begin{proof}
Since the identity function is a homomorphism, for a graph with some vertices of finite corner rank, $F_1$ is a homomorphism, as is $F_\infty$ on graphs with no vertices of finite corner rank. We show that each $f_k$ is a homomorphism, which implies all other $F_k$'s are homomorphisms
because the property is preserved by composition.
Suppose $u, v \in V(\uprank{\graph{G}}{k})$ are distinct and adjacent, and let $u^* \in f_k(u)$ and
$v^* \in f_k(v)$; we show that $u^*$ and $v^*$ are adjacent.  Note that even if $u^* = v^*$, the argument works since our graphs are reflexive; in particular, to say that $w$ corners $v$, simply amounts to saying that $w$ is adjacent to everything $v$ is adjacent to (this includes $w$ being adjacent to itself and to $v$).

\textbf{Case 1:}  Suppose $u,v \in V(\uprank{\graph{G}}{k+1})$.  Then $f_k(v) = \{ v \}$ and $f_k(u) = \{ u \}$,
and so $u^* = u$ and $v^* = v$ are  adjacent.

\textbf{Case 2:} Suppose  $v \in V(\uprank{\graph{G}}{k+1})$, $u \notin V(\uprank{\graph{G}}{k+1})$.
So $v^* = v$ and $u^* \neq u$.  
Since $u^*$ strictly corners $u$ in \uprank{\graph{G}}{k}, $u^*$ is adjacent to $v$, and thus $u^*$  and $v^*$ are adjacent.

\textbf{Case 3:} Suppose $u,v \notin V(\uprank{\graph{G}}{k+1})$. 
So $v^* \neq v$ and $u^* \neq u$.  
Since $u^*$ strictly corners $u$ in \uprank{\graph{G}}{k}, $u^*$ is adjacent to both $u$ and $v$. Since $v^*$ strictly corners $v$ in
\uprank{\graph{G}}{k}, and $v$ is adjacent to $u^*$, we have that
$v^*$ is also adjacent to $u^*$. 
\end{proof}

We state an important fact that follows from the properties of the projection functions.

\begin{defn}
For a vertex $v$ in a graph, let $\clnbk{k}[v]$ denote the set of vertices in $\clnb{v}$ of 
rank at least $k$.
\end{defn}

\begin{lemma}\label{projnbr}
Let $c$ be a vertex in a graph, and $c'\in F_k(c)$.  Then $\clnbk{k}[c] \subseteq \clnbk{k}[c']$.
\end{lemma}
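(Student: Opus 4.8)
The plan is to combine two ingredients: the homomorphism property of $F_k$ just established, and the observation that a vertex of rank at least $k$ is a fixed point of the projection at level $k$. First I would fix an arbitrary $w \in \clnbk{k}[c]$; by definition this means $w$ is adjacent to $c$ and $\rfunct{w} \ge k$. Since the rank of $w$ is intrinsic and unaffected by projecting $c$, the membership $w \in \clnbk{k}[c']$ will follow as soon as $w$ is shown to be adjacent to $c'$. Thus the entire lemma reduces to establishing this single adjacency.

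The key step is to compute $F_k(w)$. Because $\rfunct{w} \ge k$, for every index $j$ with $1 \le j \le k-1$ we have $\rfunct{w} > j$, so the first case in the definition of $f_j$ applies and gives $f_j(w) = \{w\}$; in particular $w$ is never among the strict corners deleted at any earlier stage and so remains present in each $\uprank{\graph{G}}{j}$. Composing these identities yields $F_k(w) = \{w\}$, that is, $w$ projects to itself at level $k$.

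With this computed, the conclusion is immediate from the preceding lemma that each $F_k$ is a homomorphism. Since $c$ and $w$ are adjacent in $\graph{G}$, and we have $c' \in F_k(c)$ together with $w \in F_k(w)$, the homomorphism property forces $c'$ and $w$ to be adjacent (the possibility $c' = w$ causes no trouble, since our graphs are reflexive). Hence $w \in \clnb{c'}$, and as $\rfunct{w} \ge k$ is unchanged we conclude $w \in \clnbk{k}[c']$. Since $w$ was arbitrary, $\clnbk{k}[c] \subseteq \clnbk{k}[c']$.

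I do not anticipate a serious obstacle here; the one point requiring care is the computation $F_k(w) = \{w\}$ for vertices of rank at least $k$, which hinges on noticing that the rank threshold in the definition of each $f_j$ is exactly $j$. A vertex of rank $k$ therefore clears every threshold strictly below $k$ and is fixed throughout the composition defining $F_k$, which is precisely what lets the homomorphism property deliver the adjacency of $c'$ to $w$ rather than merely to some projection of $w$.
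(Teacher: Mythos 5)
Your proof is correct and follows the same route as the paper's: fix $w \in \clnbk{k}[c]$, note that $\rfunct{w} \ge k$ forces $F_k(w) = \{w\}$, and then invoke the homomorphism property of $F_k$ to get adjacency of $w$ and $c'$. The only difference is that you spell out the computation $F_k(w)=\{w\}$ stage by stage, which the paper asserts in one clause.
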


\begin{proof}
Consider a vertex $w \in \clnbk{k}[c]$.  Since $F_k$ is a homomorphism, any vertex in $F_k(w)$ is adjacent to $c'$, but since $w$ has rank at least $k$, $F_k(w) = \{w\}$, so $w\in \clnbk{k}[c']$.
\end{proof}

\section{Cop Strategy: Lower Way}\label{LowWay}
We describe a class of winning strategies for the cop, which allows us to prove that any graph with a finite corner rank is cop-win, and yields an upper bound on the capture time.
These strategies will turn out to be optimal for the cop and will be called \emph{Lower Way} since they
(roughly) have the property that as the game proceeds, the cop forces the robber to occupy lower-ranked vertices.



\begin{defn} Suppose $c$ and $r$ are vertices in a graph \graph{G}.
We say that $r$ is \dword{$0$-cornered} by $c$ if $c=r$.  For $k>0$, we say that $r$ is \dword{$\mathbf{k-}$cornered} by $c$ if there is some  $r' \in F_k(r)$ such that
$c$ corners $r'$ in $\uprank{\graph{G}}{k} \cup c$.

\end{defn}

\noindent
We do \emph{not} require strict cornering in this definition.
Note that $1$-cornered means the same thing as cornered.

\begin{defn} Consider a graph with vertices $c$ and $r$, and an integer $k \ge 1$.
We say that $c$ has \dword{$\mathbf{k-}$caught} $r$ if $c \in F_k(r)$.

\end{defn}

\noindent
Notice that $1$-caught means the same thing as caught.

When a cop has actually cornered the robber, then whatever the robber does, the cop can catch him on the next move.
The next lemma is a version of that fact for projections.

\begin{lemma} \label{lemma_cornertocatch}
For all integers $k \ge 1$,
if the cop has $k$-cornered the robber, then whatever move the robber makes, 
the cop can $k$-catch the robber on her next move.

\end{lemma}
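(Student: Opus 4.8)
The plan is to fix an arbitrary $k \ge 1$ and argue directly from the definitions together with the homomorphism property of $F_k$; no induction on $k$ is needed, since the whole statement concerns a single projection function. Write $c$ for the cop's vertex and $r$ for the robber's. By hypothesis the cop has $k$-cornered the robber, so I can fix a projection $r' \in F_k(r)$ such that $c$ corners $r'$ in $\uprank{\graph{G}}{k} \cup c$; unpacking the definition of cornering, every vertex of $\uprank{\graph{G}}{k} \cup c$ that is adjacent to $r'$ is also adjacent to $c$.

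Now suppose the robber moves to a vertex $s$, where $s$ is adjacent to $r$ and the ``stay still'' option is covered by the reflexive loop at $r$ (so the relation ``$r$ adjacent to $s$'' includes the case $s = r$). I want to exhibit a legal cop move landing in $F_k(s)$. First I pick any $s' \in F_k(s)$; this set is non-empty by the remark following the definition of the projection functions (the $f_k$, and hence $F_k$, always take non-empty values). Since $F_k$ is a homomorphism and $r$ is adjacent to $s$, applying it to the projections $r' \in F_k(r)$ and $s' \in F_k(s)$ shows that $r'$ is adjacent to $s'$ in $\uprank{\graph{G}}{k}$.

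Both $r'$ and $s'$ lie in $\uprank{\graph{G}}{k} \cup c$, so $s'$ is a neighbor of $r'$ there, and the cornering hypothesis then forces $s'$ to be adjacent to $c$ as well. Because $\uprank{\graph{G}}{k} \cup c$ is an induced subgraph of $\graph{G}$, this adjacency transfers to an honest adjacency in $\graph{G}$. Hence the cop can move from $c$ to $s'$ (a legal move, since $c$ and $s'$ are adjacent in $\graph{G}$), and after this move the cop occupies $s' \in F_k(s)$, i.e.\ the cop has $k$-caught the robber, which is exactly what we need.

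I expect the only real care to lie in the bookkeeping rather than in any deep idea. The three points to pin down are: that the ``stay still'' option is genuinely handled (it is, via reflexivity, so a single argument covers both $s = r$ and $s \neq r$); that the cornering relation, which is stated inside $\uprank{\graph{G}}{k} \cup c$, legitimately yields an adjacency in $\graph{G}$ (which holds precisely because that graph is an induced subgraph of $\graph{G}$); and that $F_k(s)$ is non-empty so that a target vertex $s'$ exists. Once these are settled, the homomorphism property of $F_k$ does all the work.
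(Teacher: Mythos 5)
Your proof is correct and follows essentially the same route as the paper's: fix the cornered projection $r' \in F_k(r)$, use the homomorphism property of $F_k$ to get that any projection $s' \in F_k(s)$ of the robber's new position is adjacent to $r'$, invoke the cornering to conclude $c$ is adjacent to $s'$, and move the cop there. The extra bookkeeping you flag (reflexivity covering the stay-still move, non-emptiness of $F_k(s)$, adjacency transferring from the induced subgraph) is handled implicitly in the paper and is fine as stated.
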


\begin{proof} Given a graph \graph{G}, assume the cop is at some vertex $c$ and the robber is at some vertex $r_0$ such that for some $r_0' \in F_k(r_0)$, $c$ corners $r_0'$ in 
$\uprank{\graph{G}}{k} \cup c$.
Now suppose the robber moves from $r_0$ to $r_1$; let $r_1' \in F_k(r_1)$. 
Since $r_0$ and $r_1$ are adjacent and $F_k$ is a homomorphism, $r_0'$ and $r_1'$ must also be adjacent.  Since $c$ corners $r_0'$ in 
$\uprank{\graph{G}}{k} \cup c$, $c$
must be adjacent to $r_1'$, so the cop can move to $r_1'$ and thus $k$-catch the robber.
\end{proof}

\begin{lemma} \label{lemmaCaughtToCorner}
For all integers $k \ge 1$,
if the cop has $k$-caught the robber, then the cop has $(k-1)$-cornered the robber. 
\end{lemma}

\begin{proof}
If $k=1$, then the cop has caught the robber and thus the robber is 0-cornered. Assume $k>1$.
Suppose the cop is at $c$ and the robber is at $r$.  Since $c \in F_k(r) =  f_{k-1} \circ  F_{k-1} (r)$, either $c \in F_{k-1}(r)$ or 
there is an $r' \in F_{k-1}(r)$ such that $c$ strictly corners $r'$ in 
\uprank{\graph{G}}{k-1}.
In either case, the robber is $(k-1)$-cornered.
\end{proof}

Lemmas~\ref{lemma_cornertocatch} and \ref{lemmaCaughtToCorner} yield the following corollaries.
\begin{cor} \label{cor_corner_reduction}
For all integers $k \ge 1$, if the cop has $k$-cornered the robber, then whatever move the robber makes, 
the cop can $(k-1)$-corner the robber on her next move.
\end{cor}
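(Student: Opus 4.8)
The plan is to chain together the two preceding lemmas directly, using a one-step intermediate: $k$-cornering gives us a $k$-catch on the next cop move (Lemma~\ref{lemma_cornertocatch}), and a $k$-catch is in turn a $(k-1)$-cornering of the \emph{current} position (Lemma~\ref{lemmaCaughtToCorner}). Since $k$-catching and $k$-cornering both refer to static configurations of cop and robber positions, these should compose cleanly across a single round of play. First I would fix $k \ge 1$ and assume the cop has $k$-cornered the robber, with the cop at some vertex $c$ and the robber at $r_0$.

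Next I would invoke Lemma~\ref{lemma_cornertocatch}: whatever move the robber makes, say to a vertex $r_1$, the cop can respond with a move to some vertex $c_1$ so that the cop has $k$-caught the robber, i.e.\ $c_1 \in F_k(r_1)$. This is exactly the conclusion of Lemma~\ref{lemma_cornertocatch} applied to the hypothesis. At this point the relevant configuration is the cop at $c_1$ and the robber at $r_1$, and we know it is a $k$-catch.

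Finally I would apply Lemma~\ref{lemmaCaughtToCorner} to this new configuration: since the cop has $k$-caught the robber (cop at $c_1$, robber at $r_1$), the cop has $(k-1)$-cornered the robber. Combining the two steps: starting from a $k$-cornered position, after the robber's arbitrary move and the cop's chosen response, the cop has achieved a $(k-1)$-cornered position, which is precisely the claim.

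I do not anticipate a genuine obstacle here, since the corollary is essentially a syntactic composition of the two lemmas. The one point requiring a little care is making sure the ``whatever move the robber makes'' quantifier passes through correctly: Lemma~\ref{lemma_cornertocatch} already handles an arbitrary robber move and promises a cop response, and Lemma~\ref{lemmaCaughtToCorner} is a purely configurational statement that holds regardless of how the $k$-catch was reached, so the universal quantifier over robber moves is preserved. The only subtlety worth noting is the boundary case: when $k=1$, the $(k-1) = 0$ case of cornering means the cop and robber coincide, and this is consistent because Lemma~\ref{lemmaCaughtToCorner} explicitly covers $k=1$ by reducing a $1$-catch (an actual capture) to a $0$-cornering.
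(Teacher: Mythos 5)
Your proof is correct and follows exactly the paper's route: the paper derives this corollary by composing Lemma~\ref{lemma_cornertocatch} (a $k$-cornering yields a $k$-catch after the cop's response to any robber move) with Lemma~\ref{lemmaCaughtToCorner} (a $k$-catch is a $(k-1)$-cornering), which is precisely your argument. Your attention to the quantifier over robber moves and the $k=1$ boundary case is sound and consistent with the paper's (unstated) details.
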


\begin{cor} \label{cor_catch_reduction}
For all integers $k \ge 2$, if the cop has $k$-caught the robber, then whatever move the robber makes, 
the cop can $(k-1)$-catch the robber on her next move.
\end{cor}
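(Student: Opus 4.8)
The plan is to chain together the two preceding lemmas, exactly as the statement of Corollary~\ref{cor_catch_reduction} is set up to allow. Suppose the cop has $k$-caught the robber, where $k \ge 2$. Since Lemma~\ref{lemmaCaughtToCorner} holds for every integer $k \ge 1$, I would first apply it to conclude that the cop has $(k-1)$-cornered the robber.

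Next, because $k \ge 2$, the index $k-1$ satisfies $k-1 \ge 1$, which is precisely the hypothesis needed to invoke Lemma~\ref{lemma_cornertocatch} at the shifted index $k-1$. Applying that lemma to the fact that the cop has $(k-1)$-cornered the robber, I would conclude that whatever move the robber makes, the cop can $(k-1)$-catch the robber on her next move. This is exactly the desired conclusion, so the two invocations compose to give the corollary.

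The only point requiring care is the index bookkeeping: the restriction $k \ge 2$ in the statement is not incidental but is exactly what guarantees $k-1 \ge 1$, so that Lemma~\ref{lemma_cornertocatch} is available at the shifted index. It is worth noting the parallel with Corollary~\ref{cor_corner_reduction}, which composes the same two lemmas in the opposite order (cornering first, catching second) and therefore only needs $k \ge 1$; here the order is reversed, which is what forces the slightly stronger hypothesis. Since the argument is a direct two-step composition of results already established, I anticipate no genuine obstacle. The proof is essentially a syllogism, and the main thing to verify is simply that the hypotheses of each lemma are met at the appropriate index.
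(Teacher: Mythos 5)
Your proof is correct and is exactly the argument the paper intends when it says the corollary follows from Lemmas~\ref{lemmaCaughtToCorner} and \ref{lemma_cornertocatch}: apply the catching-to-cornering lemma first, then the cornering-to-catching lemma at the shifted index $k-1$. Your observation that the hypothesis $k \ge 2$ is exactly what makes the shifted index legal (and why Corollary~\ref{cor_corner_reduction}, composing in the other order, only needs $k \ge 1$) is also accurate.
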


To define Lower Way strategies, we need to describe the starting positions for the cop.
\begin{defn}\label{stansta}
 Consider an $r$-cop-win graph \graph{G} of finite corner rank $\alpha$.
An initial placement by the cop is called \dword{standard} if that vertex 
dominates $V(\uprank{\graph{G}}{\alpha - r})$.

\end{defn}

\noindent
In both $0$-cop-win and $1$-cop-win graphs, any vertex of rank $\alpha$ is a standard initial placement, though there may be more. Furthermore, Lemmas~\ref{lemma_robber_start} and \ref{projnbr} imply: in a 
$0$-cop-win graph, while the cop can dominate 
$V(\uprank{\graph{G}}{\alpha})$ 
(i.e. the top corner rank vertices), the cop cannot dominate 
$V(\uprank{\graph{G}}{\alpha - 1})$ 
(i.e. the top two
corner rank vertices), and in a 
$1$-cop-win graph, while the cop can dominate 
$V(\uprank{\graph{G}}{\alpha - 1})$, the cop cannot dominate
$V(\uprank{\graph{G}}{\alpha - 2})$
 (i.e. the top three corner rank vertices).

\begin{defn} [{\bf Lower Way and Catching Strategies}]
\label{def_lw_catching}

Consider an $r$-cop-win graph with finite corner rank $\alpha$. 

\begin{itemize}

\item
A cop strategy is called \dword{Lower Way} if the cop's initial placement is standard, and it satisfies the condition LW.
\begin{quote}
{\bf LW}: For $t\ge 1$, after $t$ cop moves, the cop $k$-corners the robber for some
$k \le \alpha - r - t$.
\end{quote}

\item
A cop strategy is called \dword{Catching} if the cop's initial placement is standard, and it satisfies the condition C.
\begin{quote}
{\bf C}: 
For $t\ge 1$, after $t$ cop moves, the cop has $k$-caught the robber for some
$k \le \alpha - r - t + 1$.
\end{quote}

\end{itemize}

\end{defn}

\noindent
A simple special case of the Lower Way strategy is as above,
with condition LW replaced by:
\emph{The cop moves to $k$-corner the robber for $k$ minimum.} 
Similarly, a simple special case of the Catching strategy replaces condition C by:
\emph{The cop moves to $k$-catch the robber for $k$ minimum.} The Catching strategies are basically described in \cite{CNG14} (technically the Catching strategies subsume their strategies, since in \cite{CNG14} they simply have the cop start in 
\uprank{\graph{G}}{\alpha}).

The next two theorems follow immediately from repeated application of Corollary~\ref{cor_corner_reduction} or
Corollary~\ref{cor_catch_reduction}.
\begin{thm} If the cop has $k$-cornered the robber or
$(k+1)$-caught the robber, then the cop needs at most $k$ more moves to win.
\end{thm}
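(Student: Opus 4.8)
The plan is to prove the statement by separately handling its two hypotheses, using the reduction corollaries to count down the parameter $k$ until we reach an actual capture. The statement asserts that if the cop has $k$-cornered the robber, or has $(k+1)$-caught the robber, then at most $k$ more cop moves suffice to win. Since both hypotheses should lead to the same bound, I expect the natural route is to show that the $(k+1)$-caught case reduces immediately to the $k$-cornered case, and then to handle the $k$-cornered case by induction on $k$.

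First I would dispose of the caught hypothesis. By Lemma~\ref{lemmaCaughtToCorner}, if the cop has $(k+1)$-caught the robber, then the cop has $k$-cornered the robber. So this hypothesis is subsumed by the cornering hypothesis with the same parameter $k$, and it suffices to prove: if the cop has $k$-cornered the robber, then the cop needs at most $k$ more moves to win.

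Next I would prove the cornering claim by induction on $k$. For the base case $k=0$, being $0$-cornered means $c = r$, so the robber is already caught and $0$ further moves are needed. For the inductive step, suppose the cop has $k$-cornered the robber with $k \geq 1$. By Corollary~\ref{cor_corner_reduction}, whatever move the robber makes, the cop can respond so as to $(k-1)$-corner the robber on her next move. That uses exactly one cop move, after which the robber is $(k-1)$-cornered; by the inductive hypothesis the cop then needs at most $k-1$ further moves, giving a total of at most $1 + (k-1) = k$ moves. This completes the induction.

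The argument is essentially a clean counting-down of the cornering parameter, so there is no serious obstacle; the content has already been packaged into Lemma~\ref{lemmaCaughtToCorner} and Corollary~\ref{cor_corner_reduction}. The one point I would state carefully is the meaning of ``win'': reaching the $0$-cornered state means $c = r$, i.e.\ the robber has actually been caught, which is why the base case terminates the game rather than merely reducing a projection parameter. I would also note explicitly that the inductive step is where the robber's move is absorbed — Corollary~\ref{cor_corner_reduction} already quantifies over all robber responses, so no separate case analysis on the robber's behavior is required.
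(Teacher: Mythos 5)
Your proof is correct, and its core is the same as the paper's: the paper disposes of this theorem in one line, saying it ``follows immediately from repeated application of Corollary~\ref{cor_corner_reduction} or Corollary~\ref{cor_catch_reduction},'' and your induction on $k$ is exactly that repeated application made precise (base case: $0$-cornered means $c=r$; step: one cop move drops the cornering parameter by one). The only real difference is how the $(k+1)$-caught hypothesis is handled: the paper stays in the ``caught'' track, repeatedly applying Corollary~\ref{cor_catch_reduction} until the cop is $1$-caught (i.e.\ has caught the robber), whereas you convert once, via Lemma~\ref{lemmaCaughtToCorner}, from $(k+1)$-caught to $k$-cornered and then run everything through the cornered track. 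Both yield the bound of $k$ cop moves; your variant is a slight unification (it never needs Corollary~\ref{cor_catch_reduction}), while the paper's parallel treatment keeps the two notions symmetric, which matches how it later contrasts Catching and Lower Way strategies.
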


\begin{thm} \label{thm_LW_C_continuation} \

\begin{itemize}

\item
If the cop ever satisfies the {\bf LW} condition, then she can continue to satisfy the {\bf LW} condition for the rest of the game.

\item
If the cop ever satisfies the {\bf C} condition, then she can continue to satisfy the {\bf C} condition for the rest of the game.

\end{itemize}

\end{thm}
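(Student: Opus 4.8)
The plan is to prove each bullet by induction on the number $t$ of cop moves, with the inductive step being exactly one application of the corresponding corollary. Both statements have the same shape, so I would treat the \textbf{LW} case in detail and the \textbf{C} case by analogy. Throughout, $\alpha$ is the (finite) corner rank, $r$ is the cop-win number, and the cop-win graph is fixed.

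For the \textbf{LW} bullet, suppose the cop first satisfies \textbf{LW} after some number of moves; I claim that if \textbf{LW} holds after $t \ge 1$ cop moves then the cop can guarantee it holds after $t+1$ cop moves (or else the game has already ended). Concretely, \textbf{LW} at time $t$ furnishes an integer $k$ with $k \le \alpha - r - t$ such that the cop has $k$-cornered the robber. If $k = 0$, then by definition the cop occupies the robber's vertex and the game is over, so there is nothing to continue. Otherwise $k \ge 1$, and Corollary~\ref{cor_corner_reduction} applies: whatever move the robber makes, the cop can respond so as to $(k-1)$-corner him. After this, her $(t+1)$-st move, she $(k-1)$-corners the robber, and since $k - 1 \le (\alpha - r - t) - 1 = \alpha - r - (t+1)$, condition \textbf{LW} holds at time $t+1$. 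Iterating this step yields the first bullet.

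The \textbf{C} case is identical in structure, using Corollary~\ref{cor_catch_reduction} in place of Corollary~\ref{cor_corner_reduction}. If \textbf{C} holds after $t \ge 1$ moves, there is a $k \le \alpha - r - t + 1$ with the cop having $k$-caught the robber. If $k = 1$, then $c \in F_1(r) = \{r\}$, so the robber is caught and the game is over; otherwise $k \ge 2$, the corollary lets the cop $(k-1)$-catch the robber after any robber move, and $k - 1 \le (\alpha - r - t + 1) - 1 = \alpha - r - (t+1) + 1$, which is exactly \textbf{C} at time $t+1$. Iterating again gives the second bullet.

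The only point requiring any care — and the natural place for the argument to break — is the boundary behaviour of the witnessing index $k$. At each step $k$ decreases by one, so I must confirm the corollaries remain applicable throughout the continuation. Corollary~\ref{cor_corner_reduction} requires $k \ge 1$ and Corollary~\ref{cor_catch_reduction} requires $k \ge 2$; these thresholds coincide precisely with the values ($k = 0$ for cornering, $k = 1$ for catching) at which the robber has already been physically captured. Hence whenever a corollary fails to apply the game is over and no further continuation is needed, so the induction proceeds without stalling. This boundary bookkeeping is the whole content of the proof; the rest is the mechanical unwinding of the two corollaries.
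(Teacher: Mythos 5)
Your proof is correct and follows exactly the route the paper intends: the paper dispatches this theorem in one line as ``immediate from repeated application of Corollary~\ref{cor_corner_reduction} or Corollary~\ref{cor_catch_reduction},'' which is precisely your induction on $t$. Your added care at the boundary values ($k=0$ for cornering, $k=1$ for catching, where the robber is already caught) is a worthwhile detail the paper leaves implicit, but it does not constitute a different argument.
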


\begin{lemma}\label{catisl}  
Any Catching strategy is also a Lower Way strategy.
\end{lemma}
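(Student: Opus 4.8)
The plan is to unwind the two definitions and observe that the inequality appearing in condition {\bf C} is exactly one unit stronger than the inequality in condition {\bf LW}, with the one-unit gap absorbed by Lemma~\ref{lemmaCaughtToCorner}. Since a Catching strategy and a Lower Way strategy impose the identical requirement on the cop's opening move---both demand a standard initial placement---that half of the claim is immediate, and the only thing left to check is that any strategy satisfying {\bf C} also satisfies {\bf LW}.

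Concretely, I would assume the cop follows a Catching strategy on an $r$-cop-win graph of finite corner rank $\alpha$, and fix an arbitrary $t \ge 1$. By condition {\bf C}, after $t$ cop moves the cop has $k$-caught the robber for some $k \le \alpha - r - t + 1$. Applying Lemma~\ref{lemmaCaughtToCorner} to this $k$ tells us that the cop has $(k-1)$-cornered the robber. Writing $k' = k - 1$, the bound $k \le \alpha - r - t + 1$ becomes $k' \le \alpha - r - t$, which is precisely what condition {\bf LW} asks for at this value of $t$. As $t$ was arbitrary, {\bf LW} holds for every $t \ge 1$, and the strategy is therefore Lower Way.

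The only point requiring a moment's care is the boundary behavior, and it is where I would pause to confirm nothing breaks. Since $k$-caught is defined only for $k \ge 1$, we always have $k' = k - 1 \ge 0$, so the resulting cornering is a legitimate instance of the definition. In particular, when $t$ reaches $\alpha - r$, condition {\bf C} forces $k \le 1$, hence $k = 1$; here the cop has literally caught the robber, and Lemma~\ref{lemmaCaughtToCorner} yields $0$-cornering (that is, $c = r$), a valid case of {\bf LW} with $k' = 0$. Thus the conversion remains correct at the final move. I expect no genuine obstacle: once the definitions of $k$-caught and $k$-cornered are lined up, the lemma is essentially a direct corollary of Lemma~\ref{lemmaCaughtToCorner}, and the argument is a one-line index shift rather than a separate combinatorial construction.
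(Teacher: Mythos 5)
Your proof is correct and follows essentially the same route as the paper: both note that the standard initial placement condition is shared, then apply Lemma~\ref{lemmaCaughtToCorner} to convert ``$k$-caught with $k \le \alpha - r - t + 1$'' into ``$(k-1)$-cornered with $k-1 \le \alpha - r - t$,'' which is exactly condition {\bf LW}. Your additional check of the boundary case ($k=1$ giving $0$-cornering) is a nice touch of care that the paper leaves implicit, but it is the same argument.
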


\begin{proof}
Suppose the cop plays a Catching strategy. Then she will start in a standard position, consistent with a Lower Way strategy.  
For all $t\ge 1$, after $t$ cop moves, the cop has $k$-caught the robber for some $k \le \alpha - r - t + 1$. Thus by Lemma~\ref{lemmaCaughtToCorner}, for all $t\ge 1$, after $t$ cop moves, the cop has $k$-cornered the robber for some $k \le \alpha - r - t$, and thus the strategy is also a Lower Way strategy.
\end{proof}

\noindent
We will discuss some of the differences between Lower Way and Catching strategies in Section~\ref{sec_final_discussion}. In particular, in Theorem~\ref{catnlw} we show that there are Lower Way strategies that are not Catching strategies.

\begin{thm} \label{thm_cop_strat}
 Consider an $r$-cop-win graph of finite corner rank $\alpha$.
There exist Lower Way and Catching strategies.  Furthermore, if the
cop follows a Lower Way strategy or a Catching strategy, then the cop will win in at most $\alpha - r$ moves.

\end{thm}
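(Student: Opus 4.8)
The plan is to prove everything by first constructing a \emph{Catching} strategy and then leaning on the machinery already in place. Since Lemma~\ref{catisl} says every Catching strategy is also a Lower Way strategy, exhibiting a single Catching strategy simultaneously yields the existence of both a Catching and a Lower Way strategy. The move bound will then follow directly from the defining condition of a Lower Way strategy, again using Lemma~\ref{catisl} to reduce the Catching case to the Lower Way case.

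The heart of the argument is the base case: showing that from a standard initial placement the cop can, on her first move, satisfy condition \textbf{C} at $t=1$. Let $c_0$ be the cop's standard placement, so by Definition~\ref{stansta} the vertex $c_0$ dominates $V(\uprank{\graph{G}}{\alpha-r})$, and let $r_0$ be the robber's placement. Because $F_{\alpha-r}$ maps into $\uprank{\graph{G}}{\alpha-r}$ and has non-empty values, I can pick some $r_0' \in F_{\alpha-r}(r_0) \subseteq V(\uprank{\graph{G}}{\alpha-r})$. Since $c_0$ dominates $V(\uprank{\graph{G}}{\alpha-r})$, it is adjacent to $r_0'$, so the cop may move from $c_0$ to $r_0'$. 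After this first move the cop sits at $c_1 = r_0' \in F_{\alpha-r}(r_0)$ while the robber is still at $r_0$; hence the cop has $(\alpha-r)$-caught the robber. As $\alpha-r \le \alpha-r-1+1$, this is exactly condition \textbf{C} for $t=1$. Note that this step sidesteps any turn-order bookkeeping: rather than reasoning ``corner, robber moves, then catch,'' the cop simply walks directly onto a projection of the robber's (stationary) location.

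With condition \textbf{C} established at $t=1$, Theorem~\ref{thm_LW_C_continuation} lets the cop continue to satisfy \textbf{C} for the remainder of the game, producing a full Catching strategy; by Lemma~\ref{catisl} this is also a Lower Way strategy, so strategies of both kinds exist. For the capture-time bound, by Lemma~\ref{catisl} it suffices to treat a Lower Way strategy. By the \textbf{LW} condition, after $t$ cop moves the cop $k$-corners the robber for some $k \le \alpha-r-t$. Taking $t = \alpha-r$ forces $k \le 0$, and since cornering requires $k \ge 0$ this means $k=0$, i.e. $c=r$: the robber is caught. Thus the cop wins in at most $\alpha-r$ moves.

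The only genuinely delicate point is the base case, and within it the recognition that a standard placement supplies precisely the domination needed to step onto an $(\alpha-r)$-level projection. The reduction $(\alpha-r)\text{-caught} \Rightarrow (\alpha-r-1)\text{-cornered}$ (Lemma~\ref{lemmaCaughtToCorner}) and the descent to capture are already packaged in the corollaries feeding Theorem~\ref{thm_LW_C_continuation}, so no further work is needed there. I would also check the degenerate indices using the convention $\alpha \ge 2$: for $r \in \{0,1\}$ the index $\alpha-r \ge 1$, so $F_{\alpha-r}$ is a legitimate projection function in every case.
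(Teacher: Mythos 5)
Your proof is correct and follows essentially the same route as the paper: construct a Catching strategy by making a standard initial placement, observing that the cop's first move lands on a projection of the robber (hence a $k$-catch satisfying condition \textbf{C} at $t=1$), invoking Theorem~\ref{thm_LW_C_continuation} to continue, and using Lemma~\ref{catisl} plus the defining conditions to get existence of Lower Way strategies and the $\alpha-r$ bound. The only (cosmetic) difference is that you treat the $0$-cop-win and $1$-cop-win cases uniformly via the definition of a standard placement dominating $V(\uprank{\graph{G}}{\alpha-r})$, whereas the paper splits into the two cases with the cop starting at a rank-$\alpha$ vertex.
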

\begin{proof}

By Lemma~\ref{catisl} to show that a Lower Way strategy exists, it suffices to show that a Catching strategy exists. We choose to have
the cop start at a vertex of corner rank $\alpha$; this is a standard start.
If the graph is $0$-cop-win, then since the cop dominates 
$V(\uprank{\graph{G}}{\alpha})$, no matter where the robber's initial placement is, on her first move, she can $\alpha$-catch him.   If the graph is $1$-cop-win,  then since the cop dominates 
$V(\uprank{\graph{G}}{\alpha - 1})$, no matter where the robber's initial placement is, on her first move, she can $(\alpha-1)$-catch him.  Either way, after one move, the cop satisfies the {\bf C} condition; thus by Theorem~\ref{thm_LW_C_continuation}, the cop can continue to satisfy the {\bf C} condition.  So Catching strategies exist, and thus so do Lower Way strategies.

The upper bounds follow immediately from the definitions of the strategies.
If the cop plays a Lower Way strategy, the cop will $0$-corner (and thus catch) the robber after $\alpha -r$ cop moves.  For a Catching strategy, the cop will $1$-catch (and thus catch) the robber after $\alpha - r$ cop moves.
\end{proof}

\noindent
Thus we can derive an important upper bound and justify our use of the terminology $r$-cop-win as only referring to cop-win graphs.
\begin{cor} \label{cor_capture_upper}
Suppose \graph{G} is an $r$-cop-win graph with finite corner rank $\alpha$.  Then \graph{G} is cop-win and 
$\capt(\graph{G}) \le \alpha - r$.

\end{cor}

\section{Robber Strategy: Higher Way}\label{OneStep}

In this section we will prove lower bounds that match the upper bounds from Corollary~\ref{cor_capture_upper}.  Just as the upper bound of the last section was a result of an explicit class of cop strategies, the lower bound of this section will follow from an explicit class of robber strategies we call the \emph{Higher Way} strategies.
At the end of the section we will contrast our approach to that of \cite{CNG14}.

\begin{defn} Suppose the cop is at vertex $c$ in a graph, and 
$k \ge 1$. 
\begin{itemize}

\item We say that vertex $r$ is $k$-\dword{safe} if $\rfunct{r} \ge k$ and $c$ is \emph{not} adjacent to $r$.

\item We say that vertex $r$ is $k$-\dword{proj-safe} if $\rfunct{r} \ge k$ and
there is $c' \in F_k(c)$ such that $c'$ is \emph{not} adjacent to $r$.

\end{itemize}
\end{defn}

\noindent
Roughly, a $k$-proj-safe robber is safe from the $k$-projection of the cop, while a $k$-safe robber is safe from the cop herself.  Lemma~\ref{projnbr} implies that if $r$ is $k$-proj-safe, then $r$ is $k$-safe;  however the converse need not be true.

\begin{defn}
We say that a robber strategy is \dword{Higher Way} if at every turn (including the initial placement), the robber goes to a $k$-proj-safe vertex for $k$ maximum; if there is no such option, he moves arbitrarily to a vertex not occupied by the cop.
\end{defn}

\noindent
By our convention, any graph has rank at least two, and so has more than 1 vertex.  For a graph with more than one vertex it is immediate that Higher Way strategies exist, because the robber can start at an unoccupied vertex and during play can always stay still if there is no other option.

The next lemma will be used by the robber to start safely at a sufficiently high rank vertex.

\begin{lemma} \label{lemma_robber_start}
Suppose a graph \graph{G} has  rank $\alpha$; the cop can be anywhere.
\begin{enumerate}

\item If the graph is $0$-cop-win, then there is a $(\alpha-1)$-proj-safe vertex. 

\item  If the graph  is $1$-cop-win and $\alpha > 2$, then there is a $(\alpha-2)$-proj-safe vertex.

\item  If the graph has rank $\infty$, then there is an $\infty$-proj-safe vertex.

\end{enumerate}
\end{lemma}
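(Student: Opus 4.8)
The plan is to reduce all three parts to a single statement about domination: in each part I exhibit a level $k$ (namely $k = \alpha - 1$, $k = \alpha - 2$, and $k = \infty$ respectively) such that \emph{no} vertex of $\uprank{\graph{G}}{k}$ dominates $V(\uprank{\graph{G}}{k})$. Granting this, the proof finishes uniformly: pick any $c' \in F_k(c)$, a set which is non-empty (by Lemma~\ref{noncorndom}) and lies in $V(\uprank{\graph{G}}{k})$ by the definition of the projection functions. Since $c'$ does not dominate $V(\uprank{\graph{G}}{k})$, there is a vertex $r \in V(\uprank{\graph{G}}{k})$ with $c'$ not adjacent to $r$; as $\rfunct{r} \ge k$ and $c' \in F_k(c)$, the vertex $r$ is $k$-proj-safe by definition. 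So everything comes down to the no-domination claims.

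I would first record two elementary observations about an arbitrary graph $\graph{H}$. (i) A strict corner of $\graph{H}$ never dominates $V(\graph{H})$: if $v$ is strictly cornered by $w$ then $\clnb{v} \subsetneq \clnb{w} \subseteq V(\graph{H})$, so $\clnb{v} \neq V(\graph{H})$. (ii) If some vertex $z$ dominates $V(\graph{H})$, then every non-universal vertex of $\graph{H}$ is a strict corner, since it is strictly cornered by $z$; and by (i) every universal vertex is \emph{not} a strict corner. Hence deleting all the strict corners of $\graph{H}$ leaves exactly the universal vertices, which form a clique.

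For Part 3 I apply (ii) with $\graph{H} = \uprank{\graph{G}}{\infty}$: this graph has no strict corners and is not a clique (that is precisely how the ranking procedure terminates at $\infty$), so by the contrapositive of (ii) no vertex dominates it, giving the no-domination claim for $k = \infty$. For Part 2, where $\alpha > 2$ ensures $\uprank{\graph{G}}{\alpha-2}$ exists and is not a clique, I apply (ii) with $\graph{H} = \uprank{\graph{G}}{\alpha-2}$: if a vertex dominated it, then deleting its strict corners—which by the ranking procedure yields exactly $\uprank{\graph{G}}{\alpha-1}$—would produce a clique, forcing $\rfunct{\graph{G}} = \alpha - 1$ and contradicting $\rfunct{\graph{G}} = \alpha$. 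Hence no vertex of $\uprank{\graph{G}}{\alpha-2}$ dominates it.

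Part 1 is where the generic argument breaks and the hypothesis must be used, so I expect it to be the main obstacle: at level $\alpha - 1$ a rank-$\alpha$ vertex may genuinely dominate $V(\uprank{\graph{G}}{\alpha-1})$ (this is exactly what distinguishes $1$-cop-win graphs), so (ii) yields no contradiction there. Instead I argue directly that in a $0$-cop-win graph no vertex of $\uprank{\graph{G}}{\alpha-1}$ dominates $V(\uprank{\graph{G}}{\alpha-1})$, splitting on rank: a vertex of rank $\alpha - 1$ is a strict corner of $\uprank{\graph{G}}{\alpha-1}$ and so fails to dominate by (i), while a vertex of rank $\alpha$ fails to dominate $V(\uprank{\graph{G}}{\alpha-1})$ precisely by the $0$-cop-win hypothesis of Definition~\ref{defn_topheavy}. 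With the no-domination claim established, the uniform finish described above produces the desired $(\alpha-1)$-proj-safe vertex. The one recurring point to handle carefully is that $F_k(c)$ really is non-empty and contained in $V(\uprank{\graph{G}}{k})$, which follows from Lemma~\ref{noncorndom} and the definition of the projection functions.
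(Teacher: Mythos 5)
Your proposal is correct and takes essentially the same route as the paper's proof: the paper also reduces each part to the claim that no vertex of the relevant graph \uprank{\graph{G}}{k} dominates $V(\uprank{\graph{G}}{k})$ (for $k = \alpha-1$, $\alpha-2$, $\infty$), and then derives the same contradictions --- a dominating vertex would have rank $\alpha$ and force the graph to be $1$-cop-win in Part 1, and would force \uprank{\graph{G}}{\alpha-1} (resp.\ \uprank{\graph{G}}{\infty}) to be a clique in Parts 2 and 3, contradicting the ranking procedure. The only differences are expository: you spell out the ``it suffices'' step (non-emptiness of $F_k(c)$ and the uniform finish), which the paper leaves implicit, and you package the clique contradiction as a standalone observation rather than the paper's in-line twins argument.
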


\begin{proof}

Suppose the cop is at vertex $c$. It suffices to show that in Case 1 (resp. Case 2, Case 3) there is no vertex in 
\uprank{\graph{G}}{\alpha - 1}
(resp. 
\uprank{\graph{G}}{\alpha - 2}, \uprank{\graph{G}}{\infty})
that dominates 
$V(\uprank{\graph{G}}{\alpha - 1})$
(resp. 
$V(\uprank{\graph{G}}{\alpha - 2})$, $V(\uprank{\graph{G}}{\infty})$).

\begin{enumerate}

\item
Suppose \graph{G} is $0$-cop-win, and for the sake of contradiction, assume some vertex in 
\uprank{\graph{G}}{\alpha - 1}
dominates 
$V(\uprank{\graph{G}}{\alpha - 1})$.
Such a vertex could not be strictly cornered in 
\uprank{\graph{G}}{\alpha - 1}
so it would have rank $\alpha$, making \graph{G} $1$-cop-win, a contradiction.  

\item
Suppose \graph{G} is $1$-cop-win,  and for the sake of contradiction, assume some vertex $v$ in 
\uprank{\graph{G}}{\alpha - 2}
dominates 
$V(\uprank{\graph{G}}{\alpha - 2})$.
Any vertex not strictly cornered by $v$ in 
\uprank{\graph{G}}{\alpha - 2}
must be a twin of $v$ in 
\uprank{\graph{G}}{\alpha - 2}.

In 
\uprank{\graph{G}}{\alpha - 2}, $v$ cannot strictly corner any vertex in 
\uprank{\graph{G}}{\alpha - 1}, so all the vertices of 
\uprank{\graph{G}}{\alpha - 1}
are twins with $v$, and so they are twins with one another; i.e. 
\uprank{\graph{G}}{\alpha - 1}
is a clique. But 
\uprank{\graph{G}}{\alpha - 1}
being a clique contradicts the definition of corner rank. 

\item
Suppose $\graph{G}$ has rank $\infty$, and for the sake of contradiction, assume some vertex $v$ in 
\uprank{\graph{G}}{\infty}
dominates 
$V(\uprank{\graph{G}}{\infty})$.
Since none of the other vertices in 
\uprank{\graph{G}}{\infty}
are strictly cornered by $v$, all vertices in 
\uprank{\graph{G}}{\infty}
must be twins. Hence 
\uprank{\graph{G}}{\infty} is a clique, contradicting the definition of corner rank. 
\end{enumerate}
\end{proof}

The next lemma  shows how the robber can avoid being adjacent to a projection of the cop (and hence not adjacent to the cop) while moving down at most one corner rank on each turn.


\begin{lemma} \label{lemma_robber_response}
Suppose the cop is at some vertex $c_0$ and the robber is at a $k$-proj-safe vertex for $k \ge 2$. Suppose the cop moves from $c_0$ to $c_1$.

\begin{enumerate}

\item
The robber has a move to a 
$(k-1)$-proj-safe vertex.

\item
Furthermore, if there is a $c_1' \in F_k(c_1)$ such that the robber is 
\emph{not} cornered by $c'$ in 
\uprank{\graph{G}}{k}, then the robber has a move to a $k$-proj-safe vertex.

\end{enumerate}

\end{lemma}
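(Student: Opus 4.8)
The plan is to prove the two parts separately. Part 2 admits a short direct construction, and in fact does not even require the incoming $k$-proj-safety hypothesis. Given a $c_1' \in F_k(c_1)$ that does not corner $r$ in \uprank{\graph{G}}{k}, the failure of cornering yields a vertex $r^* \in V(\uprank{\graph{G}}{k})$ that is adjacent to $r$ but not adjacent to $c_1'$. Since \uprank{\graph{G}}{k} is an induced subgraph of a reflexive graph, $r^*$ is a legal robber move, it has rank at least $k$, and it is witnessed to be $k$-proj-safe by $c_1'$ itself. That finishes Part 2.

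For Part 1, the key reduction is that it suffices to find \emph{some} $c_1'' \in F_{k-1}(c_1)$ that fails to corner $r$ in \uprank{\graph{G}}{k-1}: the escape vertex is then constructed exactly as in Part 2, one rank lower, producing a $(k-1)$-proj-safe move. Thus the entire content of Part 1 is to rule out the ``fully cornered'' situation in which every $c_1'' \in F_{k-1}(c_1)$ corners $r$ in \uprank{\graph{G}}{k-1}.

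To rule this out I would argue by contradiction, fixing any such $c_1''$. Because $\rfunct{r} \ge k$, the vertex $r$ survives step $k-1$ of the ranking procedure and so is not a strict corner of \uprank{\graph{G}}{k-1}; hence the cornering of $r$ by $c_1''$ cannot be strict, which forces $r$ and $c_1''$ to be twins in \uprank{\graph{G}}{k-1}. Sharing a closed neighborhood, $c_1''$ is likewise not a strict corner there, so $\rfunct{c_1''} \ge k$, and therefore $c_1'' \in F_k(c_1)$. Now I invoke the hypothesis that the robber is $k$-proj-safe with respect to $c_0$: there is $c_0' \in F_k(c_0)$ with $\rfunct{c_0'} \ge k$ that is not adjacent to $r$. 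Since the cop's move gives that $c_0$ and $c_1$ are adjacent and $F_k$ is a homomorphism, $c_0'$ and $c_1''$ are adjacent; but $c_0'$ lies in \uprank{\graph{G}}{k-1} and $c_1''$ is a twin of $r$ there, so $c_0'$ adjacent to $c_1''$ forces $c_0'$ adjacent to $r$, contradicting the choice of $c_0'$.

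The main obstacle is this last argument: recognizing that ``fully cornered at level $k-1$'' is the \emph{only} obstruction to an escape, and then converting it, through the twin structure and the homomorphism property of $F_k$, into a contradiction with the robber's incoming $k$-proj-safety. The two bookkeeping points to verify carefully are that twins in \uprank{\graph{G}}{k-1} have equal corner rank (so that $c_1''$ promotes from $F_{k-1}(c_1)$ into $F_k(c_1)$) and that, because the graphs are reflexive, all adjacencies and non-adjacencies may be read off directly in the relevant induced subgraphs.
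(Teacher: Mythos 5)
Your proposal is correct and takes essentially the same approach as the paper's proof: the paper also handles Part 2 by directly exhibiting an escape vertex witnessed by $c_1'$, and proves Part 1 by contradiction, showing that any $c_1'' \in F_{k-1}(c_1)$ cornering $r$ in \uprank{\graph{G}}{k-1} must do so non-strictly (since $\rfunct{r} \ge k$), hence be a twin of $r$ with rank at least $k$, hence lie in $F_k(c_1)$ and be adjacent to $c_0'$ by the homomorphism property, contradicting the incoming $k$-proj-safety. Your packaging of the negation as a reduction (find one non-cornering projection at level $k-1$) and your observation that Part 2 does not need the safety hypothesis are only cosmetic differences from the paper's presentation.
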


\begin{proof} 
For the first part, suppose the robber is at $r_0$.  Since $r_0$ is $k$-proj-safe from $c_0$, there exists $c'_0 \in F_k(c_0)$ such that $c'_0$ is not adjacent to $r_0$.
Then the cop moves to $c_1$.  
Assume for contradiction that from $r_0$ the robber does not have a move to a $(k-1)$-proj-safe vertex.
For $r_0$ not to have such a move, means that for any $r_1 \in \clnbk{k-1}[r_0]$, for all
$c_1' \in F_{k-1}(c_1)$, $c_1'$ is adjacent to $r_1$,
i.e. all $c_1' \in F_{k-1}(c_1)$ corner $r_0$ in 
\uprank{\graph{G}}{k-1}.
Consider any such $c_1'$ which corners $r_0$ in 
\uprank{\graph{G}}{k-1}.
Since $r_0$ has rank $\ge k$, this cornering cannot be strict, and thus $c_1'$ and $r_0$ are twins in 
\uprank{\graph{G}}{k-1}. Since $r_0$ has rank $\ge k$, $c_1'$ must also have rank $\ge k$.  This implies that $f_{k-1}(c_1') = \{c_1'\}$ and so $c_1' \in F_k(c_1) =  f_{k-1} \circ  F_{k-1} (c_1)$. However since $F_k$ is a homomorphism and $c_1$ is adjacent to $c_0$, $c_1'$ is adjacent to $c_0'$. Since $r_0$ is not adjacent to $c_0'$, this contradicts the fact that $c_1'$ and $r_0$ are twins in 
\uprank{\graph{G}}{k-1}.

For the second part, note that there is a vertex of rank at least $k$ adjacent to $r$ that is not adjacent to $c_1'$, giving the robber a $k$-proj-safe move.
\end{proof}

\begin{thm} \label{thm_higherway}
Suppose \graph{G} is a graph of corner rank $\alpha$ and the robber follows a Higher Way strategy.

\begin{enumerate}

\item \label{HWcase_finite}
If \graph{G} is $r$-cop-win, the game will last at least $\alpha-r$ cop moves.



\item
\label{HWcase_infinite}
If $\alpha = \infty$, then the cop will never catch the robber.

\end{enumerate}

\end{thm}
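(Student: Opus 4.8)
The plan is to show that a Higher Way robber can keep himself \emph{proj-safe} at a controlled rank throughout the game, and then to translate proj-safety into the concrete guarantee that the cop cannot land on his vertex. The single fact tying everything together is that, by Lemma~\ref{projnbr}, a $k$-proj-safe vertex (for any $k \ge 1$) is $k$-safe, hence not adjacent to the cop; so whenever the robber finishes a move on a vertex that is $k$-proj-safe for some $k \ge 1$, the cop cannot capture him on her following move. Thus the whole theorem reduces to bounding how long the robber can maintain proj-safety at a positive level.

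For part~\ref{HWcase_finite}, I would track the maximal proj-safe rank the Higher Way robber occupies after each of his moves. Lemma~\ref{lemma_robber_start} supplies the starting value: the robber can place himself on an $(\alpha-1)$-proj-safe vertex when \graph{G} is $0$-cop-win, and on an $(\alpha-2)$-proj-safe vertex when it is $1$-cop-win (the boundary cases with $\alpha=2$ I would dispatch directly, since there the robber only needs to survive his placement). The inductive step is Lemma~\ref{lemma_robber_response}(1): from any $k$-proj-safe position with $k \ge 2$, after the cop's move the robber \emph{has} a move to a $(k-1)$-proj-safe vertex, so by the maximality built into the Higher Way rule his new rank drops by at most one. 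Hence after his $t$-th move his proj-safe rank is at least $\alpha-1-t$ (resp. $\alpha-2-t$), which stays $\ge 1$ for $t$ up to $\alpha-2$ (resp. $\alpha-3$). By the opening observation, each such move leaves him unreachable on the next cop move, so he survives cop moves $1,\dots,\alpha-1$ (resp. $1,\dots,\alpha-2$); this is exactly the assertion that the game lasts at least $\alpha-r$ cop moves.

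For part~\ref{HWcase_infinite}, the robber should simply never leave rank $\infty$. By Lemma~\ref{lemma_robber_start}(3) he can start on an $\infty$-proj-safe vertex, and because Higher Way always selects the largest available rank, it suffices to prove that $\infty$-proj-safety can always be \emph{maintained}. This is where Lemma~\ref{lemma_robber_response}(2) applies, but only once its hypothesis is verified: after the cop moves to $c_1$, some $c_1' \in F_\infty(c_1)$ must fail to corner the robber $r$ in \uprank{\graph{G}}{\infty}. I would verify this by contradiction. If every $c_1' \in F_\infty(c_1)$ cornered $r$, then---since \uprank{\graph{G}}{\infty} has no strict corners and $r$ has rank $\infty$---each such $c_1'$ would be a \emph{twin} of $r$ there. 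But the robber arrived $\infty$-proj-safe, so there is a $c_0' \in F_\infty(c_0)$ non-adjacent to $r$; as $c_1$ is adjacent to $c_0$ and $F_\infty$ is a homomorphism, $c_1'$ is adjacent to $c_0'$, and twinhood would then force $r$ adjacent to $c_0'$, a contradiction. Thus the hypothesis always holds, the robber stays $\infty$-proj-safe (hence $\infty$-safe) forever, and he is never caught.

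I expect the main obstacle to be precisely this maintenance step in part~\ref{HWcase_infinite}: Lemma~\ref{lemma_robber_response} only guarantees a \emph{drop} of one rank for free, so the genuinely new content is arguing that at the top rank no drop is ever forced. The twin-plus-homomorphism contradiction above is the crux, and it relies essentially on \uprank{\graph{G}}{\infty} containing no strict corners. Everything in part~\ref{HWcase_finite} is then routine bookkeeping of ranks together with an off-by-one count of surviving cop moves, plus the small-$\alpha$ boundary checks.
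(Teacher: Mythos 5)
Your proposal is correct and follows essentially the same route as the paper's proof: Lemma~\ref{lemma_robber_start} for the initial proj-safe placement, Lemma~\ref{lemma_robber_response}(1) plus the maximality of the Higher Way rule for the rank-drop-by-at-most-one bookkeeping (including the $\alpha=2$ boundary case), and for the infinite-rank case the same twin-plus-homomorphism contradiction exploiting that \uprank{\graph{G}}{\infty} has no strict corners. The only difference is cosmetic: you package that contradiction as verifying the hypothesis of Lemma~\ref{lemma_robber_response}(2), whereas the paper inlines the identical argument directly.
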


\begin{proof}
From Lemma~\ref{lemma_robber_start} we conclude that in Case~\ref{HWcase_finite}, if $r = 0$, the robber will start $(\alpha - 1)$-proj-safe, and if $r=1$, the robber will start $(\alpha - 2)$-proj-safe (unless \graph{G} has rank 2, in which case he will simply avoid
placing himself at the cop's location).  In Case~\ref{HWcase_infinite}, the robber will start $\infty$-safe.  We now apply Lemma~\ref{lemma_robber_response} repeatedly in all cases.

\begin{itemize}

\item[]
{\bf Case~\ref{HWcase_finite}.}

Suppose the robber is $k$-proj-safe for some $k \ge 2$.
Since the robber is $k$-proj-safe, the cop is not adjacent to the robber and
by Lemma~\ref{lemma_robber_response} whatever the cop
does, the robber can respond by moving to a vertex 
that is at worst $(k-1)$-proj-safe.
Lemma~\ref{lemma_robber_response} can be repeatedly applied until the robber is forced to move to a $1$-proj-safe vertex.
At this point, the cop is not adjacent to the robber, so it takes at least 2 more cop moves to catch the robber.

Now we calculate lower bounds on the number of cop moves.  In the 
$0$-cop-win case, the robber starts at rank 
at least $\alpha - 1$ and survives for at least $\alpha - 2$ cop moves before reaching rank 1; the additional 2 cop moves means
that at least $(\alpha - 2) + 2 = \alpha$ cop moves are needed. The $1$-cop-win case is similar for $\alpha >2$, but with the robber 
starting at a vertex of rank at least $\alpha - 2$, so we get at least $(\alpha - 3) + 2 = \alpha - 1$ cop moves.

If $\alpha = 2$ and the graph is $1$-cop-win, then the robber initially goes to a vertex not occupied by the cop and thus the cop will make at least one move, as required.

\item[]
{\bf Case~\ref{HWcase_infinite}.}

Suppose the cop's initial position is $c_0$, and the robber's initial position is at the $\infty$-proj-safe vertex $r_0$, i.e. $r_0$ is not adjacent to some $c_0' \in F_\infty(c_0)$.  Suppose the cop moves to $c_1$.
Assume for contradiction that there is no move from $r_0$ to a $\infty$-proj-safe vertex.  That
means that for any $r_1 \in \clnbk{\infty}[r_0]$, for all $c_1' \in F_{\infty}(c_1)$, $c_1'$ is adjacent to 
$r_1$, i.e. all $c_1' \in F_{\infty}(c_1)$ corner $r_0$ in 
\uprank{\graph{G}}{\infty}.

Since strict cornering is not possible in 
\uprank{\graph{G}}{\infty}, this means that $r_0$ is twins with all
$c_1' \in F_{\infty}(c_1)$.  But since any $c_1'$ is adjacent to $c_0'$ (because $c_0$ is adjacent to $c_1$, and $F_{\infty}$ is a homomorphism), the vertex $r_0$ is adjacent to $c_0'$, a contradiction.

\end{itemize}
\end{proof}

\begin{cor} \label{lemma_capture_lower}
If an $r$-cop-win graph has finite corner rank $\alpha$, then
$\capt(\graph{G})  \ge \alpha - r$.

\end{cor}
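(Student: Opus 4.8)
The plan is to derive Corollary~\ref{lemma_capture_lower} directly from Case~\ref{HWcase_finite} of Theorem~\ref{thm_higherway}, which is the substantive result; the corollary is essentially a restatement of that case in the language of capture time.

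First I would recall the definition of capture time: $\capt(\graph{G})$ is the fewest number of cop moves the cop needs to guarantee a win against any robber play. To establish the lower bound $\capt(\graph{G}) \ge \alpha - r$, it suffices to exhibit a single robber strategy that forces the cop to make at least $\alpha - r$ moves, since the capture time must account for the worst case over all robber behavior. The Higher Way strategy is exactly such a strategy, and it exists by the remark following the definition of Higher Way strategies (any graph of rank at least two has more than one vertex, so the robber can always find an unoccupied vertex to start and can always stay still).

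Second, I would invoke Theorem~\ref{thm_higherway}, Case~\ref{HWcase_finite}: if the robber follows a Higher Way strategy on an $r$-cop-win graph of finite corner rank $\alpha$, then the game lasts at least $\alpha - r$ cop moves, regardless of the cop's play. Since this holds no matter what the cop does, the cop cannot guarantee a win in fewer than $\alpha - r$ moves against a robber playing Higher Way. By the definition of optimal robber strategy and of capture time, this yields $\capt(\graph{G}) \ge \alpha - r$.

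There is essentially no obstacle here, since the real work has already been done in proving Theorem~\ref{thm_higherway}; the only point requiring a word of care is confirming that ``the game lasts at least $\alpha - r$ cop moves against Higher Way'' translates correctly into a lower bound on capture time. This follows because capture time is defined as the minimum over cop strategies of the maximum over robber strategies of the number of cop moves to win, and the Higher Way robber strategy forces at least $\alpha - r$ cop moves against every cop strategy, so the inner maximum—and hence the outer minimum—is at least $\alpha - r$. One might also note that combining this corollary with the upper bound of Corollary~\ref{cor_capture_upper} gives $\capt(\graph{G}) = \alpha - r$ and establishes optimality of both strategy classes, though that observation properly belongs to a subsequent result rather than to the proof of this corollary itself.
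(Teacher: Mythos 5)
Your proposal is correct and matches the paper's approach exactly: the paper states this as an immediate corollary of Theorem~\ref{thm_higherway} (Case~\ref{HWcase_finite}), since the Higher Way robber strategy forces at least $\alpha - r$ cop moves against every cop strategy, which by the definition of capture time gives the lower bound. Your additional care in spelling out the existence of Higher Way strategies and the min-max translation is sound but not a departure from the paper's reasoning.
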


We compare our approach to that of \cite{CNG14}.  In \cite{CNG14}, the lower bound of Corollary~\ref{lemma_capture_lower}
is proven in a natural way: by induction on the corner rank.  Our proof relies on the Higher Way strategy, an explicit description of a robber strategy.  As a result of our approach we get some benefits:
We have an explicit robber strategy, which not only delays the game as long as possible on cop-win graphs, but wins on non cop-win graphs.

\section{Main Results}
\label{sec_threeCors}

We can now put together the facts from the previous two sections to prove Theorem~\ref{thm_rank_capt_time}, one of our main theorems (also appears in \cite{CNG14}).
We can then deduce that the Lower Way and Higher Way strategies are in fact optimal (in Theorem~\ref{lwhwopt}).
We also give an alternate proof of the standard theorem characterizing cop-win graphs by dismantling orderings \cite{NW83, Qui78}.
Unlike \cite{CNG14}, we do not use the dismantling ordering characterization in our development, so we can in fact derive the result as a corollary of our work.

\begin{thm} \label{thm_rank_capt_time}
A graph is cop-win if and only if it has finite corner rank.  Furthermore, for an $r$-cop-win graph of finite corner rank $\alpha$,
$\capt(\graph{G}) = \alpha - r$.

\end{thm}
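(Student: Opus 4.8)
The plan is to assemble the two matching bounds established in the previous sections, together with the robber's winning strategy on infinite-rank graphs; essentially all the substantive work is already done, so this theorem amounts to combining the pieces.

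First I would dispatch the capture-time equality (the ``furthermore'' clause). Given an $r$-cop-win graph \graph{G} of finite corner rank $\alpha$, Corollary~\ref{cor_capture_upper} supplies the upper bound $\capt(\graph{G}) \le \alpha - r$ via the existence of a Lower Way (equivalently Catching) strategy, while Corollary~\ref{lemma_capture_lower} supplies the matching lower bound $\capt(\graph{G}) \ge \alpha - r$ via the Higher Way strategy. Sandwiching these two inequalities immediately gives $\capt(\graph{G}) = \alpha - r$.

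Next I would prove the characterization. For one direction, observe that by Definition~\ref{defn_topheavy} every finite-rank graph is either $0$-cop-win or $1$-cop-win, so Corollary~\ref{cor_capture_upper} applies and shows that any finite-rank graph is cop-win. For the converse I argue by contrapositive: if \graph{G} has infinite corner rank, then by Case~\ref{HWcase_infinite} of Theorem~\ref{thm_higherway} a robber playing a Higher Way strategy is never caught, so \graph{G} is not cop-win. Combining the two directions, a graph is cop-win precisely when its corner rank is finite.

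I do not expect a serious obstacle, since constructing the strategies and proving both bounds has already been carried out; this is largely a bookkeeping step. The one point requiring care is that the terminology ``$r$-cop-win'' is defined (in Definition~\ref{defn_topheavy}) only for finite-rank graphs, so the infinite-rank case cannot be phrased in terms of $r$ and must be handled separately. This is exactly why the backward direction of the characterization appeals to the Higher Way strategy directly (Theorem~\ref{thm_higherway}, Case~\ref{HWcase_infinite}) rather than attempting to read it off the capture-time formula. The degenerate low-rank cases are already absorbed by the standing convention that finite-rank graphs have corner rank at least $2$, so no additional case analysis is needed there.
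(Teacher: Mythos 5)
Your proposal is correct and follows essentially the same route as the paper: the backward direction of the characterization comes from Corollary~\ref{cor_capture_upper}, the forward direction from the Higher Way strategy winning on infinite-rank graphs (Theorem~\ref{thm_higherway}), and the capture-time equality from sandwiching Corollaries~\ref{cor_capture_upper} and~\ref{lemma_capture_lower}. Your explicit remark that every finite-rank graph is $0$- or $1$-cop-win by Definition~\ref{defn_topheavy}, and that the infinite-rank case must be treated separately, is just a careful spelling-out of what the paper leaves implicit.
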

\begin{proof}  
For the claim that a graph is cop-win if and only if it has finite corner rank, 
the backward direction is noted in Corollary~\ref{cor_capture_upper}, while the forward direction follows from
 Theorem~\ref{thm_higherway} (Part 3): if the graph does not have finite corner rank, the robber has a winning strategy.
Furthermore, when the corner rank is finite, 
the upper bound on capture time in Corollary~\ref{cor_capture_upper} matches the lower bound in
Corollary~\ref{lemma_capture_lower}, so these corollaries together determine
the exact capture time.
\end{proof}

Since the Lower Way and Higher Way strategies both achieve the capture time in Theorem~\ref{thm_rank_capt_time},  the following is an immediate corollary.

\begin{thm} \label{lwhwopt}
On a graph with finite corner rank, any Lower Way strategy is optimal for the cop, and any Higher Way strategy is optimal for the robber.
\end{thm}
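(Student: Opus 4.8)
Theorem~\ref{lwhwopt} claims that any Lower Way strategy is optimal for the cop and any Higher Way strategy is optimal for the robber. The plan is to read the definition of optimality back against the two capture-time bounds that are already in hand. Recall that, by Definition (the optimality definition in the introduction), a cop strategy is optimal if it wins in at most $\capt(\graph{G})$ cop moves regardless of the robber, and a robber strategy is optimal if it forces the cop to make at least $\capt(\graph{G})$ moves regardless of the cop. Theorem~\ref{thm_rank_capt_time} has just established that for an $r$-cop-win graph of finite corner rank $\alpha$, we have $\capt(\graph{G}) = \alpha - r$. So the entire task reduces to matching each strategy class against this exact value.

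First I would handle the cop side. Fix an $r$-cop-win graph \graph{G} of finite corner rank $\alpha$ and let the cop play an arbitrary Lower Way strategy. Theorem~\ref{thm_cop_strat} states precisely that if the cop follows a Lower Way strategy, the cop wins in at most $\alpha - r$ moves, no matter what the robber does. Since $\alpha - r = \capt(\graph{G})$, this is exactly the optimality condition for the cop. So this direction is a one-line invocation: the upper bound from the Lower Way analysis coincides with the capture time.

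Next I would handle the robber side symmetrically. Let the robber play an arbitrary Higher Way strategy on the same graph. Theorem~\ref{thm_higherway} (Part~\ref{HWcase_finite}) asserts that when \graph{G} is $r$-cop-win, a Higher Way strategy forces the game to last at least $\alpha - r$ cop moves regardless of the cop's play. Again using $\alpha - r = \capt(\graph{G})$, this is exactly the optimality condition for the robber: the cop is forced to make at least $\capt(\graph{G})$ moves. Hence any Higher Way strategy is optimal.

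I do not anticipate any genuine obstacle here, since this theorem is explicitly flagged in the text as ``an immediate corollary'' of Theorem~\ref{thm_rank_capt_time}. The only subtlety worth a sentence of care is the bookkeeping convention that the cop's initial placement is not counted as a move (stated when $\capt$ is defined): both Theorem~\ref{thm_cop_strat} and Theorem~\ref{thm_higherway} are phrased in terms of \emph{cop moves} after the initial placement, so the two bounds and the definition of $\capt$ all use the same counting, and no off-by-one adjustment is needed. Thus the proof is simply the observation that the matching upper and lower bounds are achieved move-for-move by the two strategy classes, which is precisely what optimality requires.
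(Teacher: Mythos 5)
Your proof is correct and takes essentially the same route as the paper: the paper also deduces optimality immediately by noting that the Lower Way upper bound (Theorem~\ref{thm_cop_strat}) and the Higher Way lower bound (Theorem~\ref{thm_higherway}) both equal the capture time $\alpha - r$ established in Theorem~\ref{thm_rank_capt_time}. Your extra remark about the move-counting convention is a fine sanity check but adds nothing beyond the paper's one-line argument.
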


We now develop some ideas that we use to show how the dismantling ordering characterization \cite{NW83, Qui78}  follows from our work.

\begin{defn}
Suppose \graph{G} is a graph and $\bar{u} = (u_1, \ldots, u_n)$ is an ordering of  its vertices.
\begin{itemize}

\item
By $\graph{G}^{(u_i)}_{\bar{u}}$ we mean the graph induced by $\{u_i, \ldots, u_n\}$.

\item
We write $u_i \prec u_j$ when $i < j$.

\item
The pair $(\graph{G}, \bar{u})$ is \dword{good} for $u_i$ if there is some $u_j$ such that $u_i \prec u_j$ and $u_j$ corners $u_i$ in
$\graph{G}^{(u_i)}_{\bar{u}}$.

\end{itemize}

\end{defn}

\noindent
Observe that to show that an ordering $\bar{u}$ is a dismantling ordering for \graph{G} amounts to showing that for each $i$ ranging from $1$ to $n-1$, $(\graph{G}, \bar{u})$ is good for $u_i$.
If the ordering $\bar{u}$ is apparent from context, we may leave off the subscript
$\bar{u}$.  We prove a small fact about dismantling orderings.

\begin{lemma} \label{lemma_dismantle_swap}
Suppose a graph \graph{G} has a dismantling ordering
$(v_1, \ldots, v_n)$ such that some $v_k$ corners $v_j$ in $\graph{G}^{(v_{j-1})}$, where
$1 < j < k$.
Then swapping the positions of $v_j$ and $v_{j-1}$ in $(v_1, \ldots, v_n)$ and leaving the 
rest of the vertices in order, is another dismantling ordering of \graph{G}.
\end{lemma}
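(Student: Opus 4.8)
The plan is to verify directly that the swapped sequence $\bar{w} = (v_1, \ldots, v_{j-2}, v_j, v_{j-1}, v_{j+1}, \ldots, v_n)$ satisfies the goodness condition at every position, which by the observation preceding the statement is exactly what it means to be a dismantling ordering. The crucial preliminary step is to record how the relevant induced subgraphs change. Writing $\bar{w} = (w_1, \ldots, w_n)$, I note that as \emph{sets} we have $\{w_i, \ldots, w_n\} = \{v_i, \ldots, v_n\}$ for every $i \ne j$, since swapping the two consecutive entries $v_{j-1}$ and $v_j$ does not alter any tail set except the one beginning at position $j$; there we instead have $\{w_j, \ldots, w_n\} = \{v_{j-1}, v_{j+1}, \ldots, v_n\}$, so $\graph{G}^{(w_j)}_{\bar{w}} = \graph{G}^{(v_{j-1})}_{\bar{u}} - v_j$. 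Thus $\graph{G}^{(w_i)}_{\bar{w}} = \graph{G}^{(v_i)}_{\bar{u}}$ for all $i \ne j$, and only position $j$ sees a genuinely different graph.

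With this in hand, positions $i \ne j-1, j$ are routine. There $\graph{G}^{(w_i)}_{\bar{w}} = \graph{G}^{(v_i)}_{\bar{u}}$ and $w_i = v_i$, so the vertex $v_l$ that cornered $v_i$ in the original ordering still corners $w_i$ in the same graph; one only checks that $v_l$ still lies strictly after position $i$ in $\bar{w}$, which holds because the swap changes each vertex's position by at most one and $l > i$. Position $j-1$ is handled immediately by the hypothesis: here $w_{j-1} = v_j$ and $\graph{G}^{(w_{j-1})}_{\bar{w}} = \graph{G}^{(v_{j-1})}_{\bar{u}}$, and $v_k$ corners $v_j$ in precisely this graph with $k > j > j-1$, so $v_k$ is a valid later witness.

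The one subtle position, and the main obstacle, is position $j$, where we must corner $w_j = v_{j-1}$ inside the \emph{smaller} graph $\graph{G}^{(v_{j-1})}_{\bar{u}} - v_j$. Since the original ordering is a dismantling ordering, some vertex $x \in \{v_j, v_{j+1}, \ldots, v_n\}$ corners $v_{j-1}$ in $\graph{G}^{(v_{j-1})}_{\bar{u}}$. I would first record a general fact: if $x$ corners $v$ in a graph $\graph{H}$ and $z \notin \{x, v\}$, then $x$ still corners $v$ in $\graph{H} - z$ --- this is immediate from $\clnb{v} \subseteq \clnb{x}$ passing to the induced subgraph. If $x \ne v_j$, this fact applied with $z = v_j$ shows $x$ corners $v_{j-1}$ in $\graph{G}^{(v_{j-1})}_{\bar{u}} - v_j = \graph{G}^{(w_j)}_{\bar{w}}$, and $x$ sits after position $j$ in $\bar{w}$, so we are done. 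The remaining case $x = v_j$ is where the hypothesis is used a second time: then $v_j$ corners $v_{j-1}$ and, by hypothesis, $v_k$ corners $v_j$, both in $\graph{G}^{(v_{j-1})}_{\bar{u}}$; by transitivity of cornering (closed-neighborhood containment is transitive), $v_k$ corners $v_{j-1}$ in that graph, and since $v_k \ne v_j$ the deletion fact again gives that $v_k$ corners $v_{j-1}$ in $\graph{G}^{(w_j)}_{\bar{w}}$, with $v_k$ a later witness.

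Putting these together establishes goodness at every position, so $\bar{w}$ is a dismantling ordering. The only real content lies at position $j$: deleting $v_j$ can destroy the original witness for $v_{j-1}$ exactly when that witness was $v_j$ itself, and the hypothesis that $v_k$ corners $v_j$ (together with transitivity) is precisely what supplies a replacement witness. Everything else is bookkeeping about which tail sets and induced subgraphs are left unchanged by a single adjacent transposition.
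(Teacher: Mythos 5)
Your proposal is correct and follows essentially the same route as the paper's proof: unaffected positions are handled by noting the tail sets (hence induced subgraphs) are unchanged, the new position $j-1$ (holding $v_j$) is handled directly by the hypothesis that $v_k$ corners $v_j$, and the new position $j$ (holding $v_{j-1}$) is handled by a case split on whether the original witness for $v_{j-1}$ was $v_j$, using transitivity of cornering to substitute $v_k$ in that case. The only differences are matters of explicitness --- you spell out the persistence of cornering under deletion of a third vertex and the position-of-witness bookkeeping, which the paper leaves implicit.
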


\begin{proof}
Let $\bar{\alpha} = (v_1, \ldots, v_n)$, and let $\bar{\beta}$ refer to the new ordering: 
$$(v_1, \ldots, v_{j-2}, v_j, v_{j-1}, v_{j+1}, \ldots, v_k, \ldots, v_n).$$
For all $i$ except $j-1$ and $j$, $(\graph{G}, \bar{\beta})$ is good for $v_i$ since $\graph{G}_{\bar{\alpha}}^{(v_i)}$ is the same graph as
$\graph{G}_{\bar{\beta}}^{(v_i)}$,
and we are given that $\bar{\alpha}$ is a dismantling ordering.

\begin{itemize}

\item
To see that $(\graph{G}, \bar{\beta})$ is good for $v_j$,
notice that $\graph{G}_{\bar{\alpha}}^{(v_{j-1})}$ and $\graph{G}_{\bar{\beta}}^{(v_{j})}$ are the same graphs, so since $v_k$ corners $v_j$ in the $\graph{G}_{\bar{\alpha}}^{(v_{j-1})}$, $v_k$ still corners $v_j$ in $\graph{G}_{\bar{\beta}}^{(v_{j})}$.

\item
To see that $(\graph{G}, \bar{\beta})$ is good for $v_{j-1}$,
consider cases on what corners $v_{j-1}$ in $\graph{G}_{\bar{\alpha}}^{(v_{j-1})}$:
\begin{itemize}

\item \emph{Case}: $v_j$.

Since $v_k$ corners $v_j$ in  $\graph{G}_{\bar{\alpha}}^{(v_{j-1})}$, and $v_j$
corners $v_{j-1}$ in $\graph{G}_{\bar{\alpha}}^{(v_{j-1})}$,
$v_k$ corners $v_{j-1}$ in $\graph{G}_{\bar{\alpha}}^{(v_{j-1})}$, and thus 
$v_k$ corners $v_{j-1}$ in $\graph{G}_{\bar{\beta}}^{(v_{j-1})}$.

\item \emph{Case}: Otherwise.

If something other than $v_j$ cornered $v_{j-1}$ in $\graph{G}_{\bar{\alpha}}^{(v_{j-1})}$, then it still does in $\graph{G}_{\bar{\beta}}^{(v_{j-1})}$. 

\end{itemize}

\end{itemize}
\end{proof}

\begin{lemma} \label{lemma_rank_one_first}
Any graph with a dismantling ordering has a dismantling ordering
in which all its rank 1 vertices are to the left of any higher ranked vertices.
\end{lemma}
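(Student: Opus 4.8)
The plan is to \emph{construct} a suitable ordering directly rather than to rearrange the given one. It is worth first noting why rearrangement via Lemma~\ref{lemma_dismantle_swap} alone cannot suffice: that lemma moves a vertex leftward only when the vertex is cornered by some vertex to its \emph{right}, so the final vertex of a dismantling ordering is frozen (nothing lies to its right). Hence if a given ordering happens to end in a rank~$1$ vertex, no sequence of swaps can bring all rank~$1$ vertices to the front. This forces us to build a fresh ordering.

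Let $\X$ be the set of rank~$1$ vertices of \graph{G}. By the Corner Ranking Procedure, and using the convention that a finite-rank graph has rank at least $2$ (so $\graph{G}=\uprank{\graph{G}}{1}$ is not a clique), $\X$ is exactly the set of strict corners of \graph{G}; if $\X=\emptyset$ the statement is vacuous, so assume not and list $\X$ in any order as $x_1,\dots,x_m$. I claim this prefix, followed by any dismantling ordering $\bar{u}$ of $\uprank{\graph{G}}{2}=\graph{G}-\X$, is the desired ordering. To check that the prefix is legal, fix $x_i$. By Lemma~\ref{noncorndom}, $x_i$ is strictly cornered by some vertex $w_i$ that is \emph{not} a strict corner, so $\rfunct{w_i}\ge 2$ and hence $w_i\notin\X$. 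Cornering survives deletion of other vertices: if $\clnb{v}\subseteq\clnb{u}$ and we delete a vertex distinct from $u$ and $v$, the inclusion persists in the smaller graph. Therefore $w_i$ still corners $x_i$ in $\graph{G}-\{x_1,\dots,x_{i-1}\}$, which is precisely $\graph{G}^{(x_i)}$ for the ordering being built; and since $w_i\notin\X$ it lies in the tail, so $x_i\prec w_i$. Thus $(\graph{G},\bar{u})$ is good for each $x_i$, and it is good for the tail vertices because $\bar{u}$ is a dismantling ordering of $\uprank{\graph{G}}{2}$.

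It remains to know that $\uprank{\graph{G}}{2}$ is dismantlable. Since \graph{G} has a dismantling ordering it is cop-win, so by Theorem~\ref{thm_rank_capt_time} it has finite corner rank; deleting the corners of $\X$ one at a time preserves the cop-win property, so $\uprank{\graph{G}}{2}$ is cop-win and hence dismantlable. Appending any of its dismantling orderings completes the construction. In fact this argument iterates: at stage $k$ the rank~$k$ vertices are the strict corners of $\uprank{\graph{G}}{k}$, each strictly cornered \emph{within} $\uprank{\graph{G}}{k}$ by a vertex of rank exceeding $k$ (Lemma~\ref{noncorndom}), and the process terminates at the clique $\uprank{\graph{G}}{\alpha}$, which is trivially dismantlable; this yields the stronger fact that the entire ordering can be taken nondecreasing in corner rank, of which the present lemma is the $k=1$ instance.

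The only delicate point is the penultimate step, namely that the remainder $\uprank{\graph{G}}{2}$ is again dismantlable. I obtain it from the well-known fact that deleting a corner preserves cop-win, together with the already-proven Theorem~\ref{thm_rank_capt_time}. This is exactly where one must guard against circularity: the argument invokes only the easy implication ``dismantlable $\Rightarrow$ cop-win'' and Theorem~\ref{thm_rank_capt_time}, and never the dismantling-ordering characterization that this section sets out to re-derive. The self-contained level-by-level variant sketched above avoids even this detour, reducing the whole proof to Lemma~\ref{noncorndom} and the elementary fact that cornering is preserved under vertex deletion, requiring externally only that the corner rank of \graph{G} be finite.
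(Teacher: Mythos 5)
Your prefix construction is sound as far as it goes: listing the rank~$1$ vertices (the strict corners of \graph{G}) first, each witnessed by Lemma~\ref{noncorndom} together with the fact that cornering persists under deletion of other vertices, is correct. The genuine gap is exactly the step you flag as delicate: justifying that $\uprank{\graph{G}}{2}$ is dismantlable. Your route is ``dismantlable $\Rightarrow$ cop-win,'' then ``deleting a corner preserves cop-win,'' then ``cop-win $\Rightarrow$ dismantlable.'' The last step is indeed available non-circularly (Theorem~\ref{thm_rank_capt_time} plus the forward direction of Lemma~\ref{lemma_finiterank_dismantling}, which needs only Lemma~\ref{noncorndom}), but the first step is not. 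Within the paper, ``dismantlable $\Rightarrow$ cop-win'' can only be obtained as ``dismantlable $\Rightarrow$ finite corner rank $\Rightarrow$ cop-win,'' and ``dismantlable $\Rightarrow$ finite corner rank'' is the backward direction of Lemma~\ref{lemma_finiterank_dismantling}, whose proof is an induction resting on Lemma~\ref{lemma_rank_one_first} --- the very statement you are proving. So inside this development your argument is circular. The only alternative is to import the implication from \cite{NW83, Qui78}; but that implication \emph{is} one of the two directions of the dismantling characterization (and not the easy one: its classical proof is the retraction-based cop strategy), so your claim that you ``never invoke the dismantling-ordering characterization'' is inaccurate --- you invoke half of it, in a section whose stated purpose is to derive the characterization from the paper's own machinery. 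The same objection applies to ``deleting a corner preserves the cop-win property,'' which the paper states as classical background but never proves and never uses.

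The level-by-level variant does not rescue the proof: it assumes \graph{G} has finite corner rank, whereas the lemma's hypothesis is only that \graph{G} has a dismantling ordering, and bridging precisely that gap is the job this lemma performs downstream; under the extra assumption you have merely re-proven the forward direction of Lemma~\ref{lemma_finiterank_dismantling}, not the lemma. For contrast, the paper's proof stays purely combinatorial: it rearranges the \emph{given} ordering by repeated swaps. Your opening objection --- that Lemma~\ref{lemma_dismantle_swap} cannot move the final vertex --- is a correct observation about that lemma in isolation, but the paper supplements it with a second kind of swap (exchanging $y$ and $v_j$ when they are twins in $\graph{G}^{(y)}$), which handles exactly the situation you worry about, e.g.\ an ordering ending in a rank~$1$ vertex. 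No game-theoretic facts enter, and that is what keeps Section~\ref{sec_threeCors} non-circular. To repair your construction you would need a purely combinatorial proof that a dismantling ordering of \graph{G} yields one of $\graph{G}-\X$ --- which is essentially the rearrangement work you set out to avoid.
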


\begin{proof}
Suppose
\graph{G} is a graph, and $(v_1, \ldots, v_n)$ is some dismantling ordering.
If the rank 1 vertices are already left of any higher ranked vertices we are done,
so assume otherwise.
Let $v_j$ be the leftmost vertex of corner rank 1 such that $v_{j-1}$ has corner rank at least 2.
It suffices to show that there is a vertex $v_i \prec v_j$ such that $v_i$ has rank at least 2 and
swapping $v_i$ and $v_j$ yields a new dismantling ordering of \graph{G}; this suffices because we can repeatedly swap vertices until all the corner rank 1 vertices are on the left.
Since $v_j$ is corner rank 1, by Lemma~\ref{noncorndom} there is a vertex $w$ of corner rank at least 2 that corners $v_j$ in \graph{G}.
Consider cases:
\begin{enumerate}

\item
$v_j \prec w$:

Then $w$ corners $v_j$ in $\graph{G}^{(v_{j-1})}$.
Thus we apply Lemma~\ref{lemma_dismantle_swap} to swap $v_j$ and $v_{j-1}$.

\item
If $w \prec v_j$:

Then let $y$ be the rightmost vertex in the dismantling ordering with the following properties:
1) $y$ has corner rank at least 2,
2) $y \prec v_j$, and
3) $y$ corners $v_j$ in $\graph{G}^{(y)}$. 
Such a $y$ exists because $w$ is a vertex with these three properties.  

If $y$ and $v_j$ are twins in $\graph{G}^{(y)}$ then they can be swapped and the result is still
a dismantling ordering.
Otherwise $y$ strictly corners $v_j$ in $\graph{G}^{(y)}$.  Let $z$ be the vertex in $\graph{G}^{(y)}$ that corners $y$ in $\graph{G}^{(y)}$.  Thus $z$ strictly corners $v_j$ in $\graph{G}^{(y)}$, and thus also $z \neq v_j$. Note that since $v_j$ was the leftmost rank 1 vertex such that $v_{j-1}$ had rank at least 2, the vertices between $y$ and $v_j$ have rank at least 2.
Thus if $z$ is between $y$ and $v_j$ that would contradict the assumption that $y$ is the  rightmost vertex satisfying the three properties above.
Since $z \neq v_j$, we have that $v_j \prec z$.
Thus we apply Lemma~\ref{lemma_dismantle_swap} to swap $v_j$ and $v_{j-1}$.

\end{enumerate}
\end{proof}

\begin{lemma} \label{lemma_finiterank_dismantling}
A graph $\graph{G}$ has finite corner rank $\alpha$ if and only if it has a dismantling ordering.
\end{lemma}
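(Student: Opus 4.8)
The plan is to establish both directions by direct combinatorial reasoning about corner rank and cornering, deliberately avoiding the game and the dismantling characterization we are reproving. The two facts I would lean on are that when $\graph{G}$ is not a clique its rank-$1$ vertices are exactly its strict corners, and that the corner ranking of $\uprank{\graph{G}}{2}$ is just the continuation of the corner ranking of $\graph{G}$, so $\graph{G}$ has finite rank if and only if $\uprank{\graph{G}}{2}$ does.

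For the forward direction (finite rank $\Rightarrow$ dismantling ordering) I would build an ordering explicitly: list the vertices in nondecreasing corner rank, breaking ties arbitrarily, so that the terminal clique $\uprank{\graph{G}}{\alpha}$ comes last. To check goodness at a vertex $x$ with $\rfunct{x} = k < \alpha$, note $x$ is a strict corner of $\uprank{\graph{G}}{k}$, so by Lemma~\ref{noncorndom} it is strictly cornered there by some $w$ with $\rfunct{w} > k$. Since $w$ has strictly larger rank it has not yet been deleted when $x$ is, and intersecting the inclusion $\clnb{x} \subseteq \clnb{w}$ with the current vertex set preserves it; hence $w$ still corners $x$. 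The terminal-clique vertices are pairwise twins and dismantle trivially, so the list is a dismantling ordering.

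For the reverse direction (dismantling ordering $\Rightarrow$ finite rank) I would induct on $|V(\graph{G})|$. If $\graph{G}$ has a strict corner it is not a clique, so its rank-$1$ vertices are precisely its strict corners; by Lemma~\ref{lemma_rank_one_first} I may take a dismantling ordering that lists all rank-$1$ vertices first. Every later vertex has rank at least $2$, so for each such vertex the relevant induced subgraph lies inside $\uprank{\graph{G}}{2}$, and the tail of the ordering is a dismantling ordering of $\uprank{\graph{G}}{2}$. As that graph is smaller, induction gives it finite rank, and hence $\graph{G}$ has finite rank. If $\graph{G}$ has no strict corner it is either a clique (rank $1$, done) or a non-clique, and the whole difficulty is concentrated in ruling out the latter.

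The main obstacle is therefore the claim that a graph \graph{H} with no strict corners that is not a clique admits no dismantling ordering. I would prove this by a second induction on $|V(\graph{H})|$, with base case two nonadjacent vertices (which has no corner at all). Any dismantling ordering of \graph{H} begins with a corner, which must be a twin since there are no strict corners; say $v$ is a twin of $w$, and set $\graph{H}' = \graph{H} - v$, still dismantlable. The crux is to show $\graph{H}'$ is again a non-clique with no strict corners, so the induction hypothesis applies and yields a contradiction. That $\graph{H}'$ is not a clique is easy: were it one, $\clnb{v} = \clnb{w}$ would force $v$ adjacent to everything, making \graph{H} a clique. The delicate point is that deleting the twin $v$ creates no strict corner: using the twin identity that a vertex is adjacent to $v$ exactly when it is adjacent to $w$, I would show that any strict cornering $\clnb{u} \subsetneq \clnb{z}$ holding in $\graph{H}'$ lifts back to a strict cornering in \graph{H}, contradicting that \graph{H} has no strict corner. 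Checking that reinserting $v$ preserves both the inclusion and its strictness is the one step that really uses the twin structure, and it is where I expect the argument to require the most care.
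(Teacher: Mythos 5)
Your proposal is correct, and in both directions it follows the same skeleton as the paper's proof: the forward direction is the nondecreasing-rank ordering, with Lemma~\ref{noncorndom} supplying, for each vertex $x$ of rank $k<\alpha$, a higher-rank vertex that still corners $x$ inside $\graph{G}^{(x)}$; the backward direction is the same induction on the number of vertices, using Lemma~\ref{lemma_rank_one_first} to move the rank-$1$ block to the front and recognize the tail as a dismantling ordering of \uprank{\graph{G}}{2}. Where you genuinely depart from the paper is the case you call the main obstacle, and it is worth emphasizing that this is not wasted effort: the paper's induction tacitly assumes that a dismantlable graph which is not a clique has at least one rank-$1$ vertex, i.e.\ at least one strict corner. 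If \graph{G} were a non-clique with no strict corner, then every vertex would receive rank $\infty$, Lemma~\ref{lemma_rank_one_first} would be vacuous, ``removing all the initial corner rank 1 vertices'' would remove nothing, and the paper's inductive step would have no smaller graph to recurse on; the paper never rules this case out. Your second induction does exactly that: the head of any dismantling ordering of such a graph \graph{H} would have to be a twin $v$ of some $w$; the graph $\graph{H}-v$ is again a non-clique (else $\clnb{v}=\clnb{w}$ would make \graph{H} itself a clique); and any strict cornering in $\graph{H}-v$ lifts to one in \graph{H}, because the only delicate case---$u$ adjacent to $v$ but the dominating vertex $z$ not adjacent to $v$---cannot occur: there $w$ lies in the closed neighborhood of $u$ within $\graph{H}-v$, hence in that of $z$, so $z$ is adjacent to $w$ and therefore to $v$ by the twin identity, a contradiction, while in all remaining cases the inclusion and its strictness survive reinsertion of $v$. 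So your write-up is not merely a restatement of the paper's argument: it closes a small but real gap in the published backward direction, at the cost of one extra induction.
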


\begin{proof}  \

\begin{itemize}

\item[]{\bf Forward Direction.} \
If $\graph{G}$ has finite corner rank then we claim that any list of the vertices where their ranks are non decreasing (i.e. the rank 1 vertices come first, in any order, followed by the rank 2 vertices in any order, and so on) is a dismantling ordering. Lemma~\ref{noncorndom} ensures that any vertex $v$ with rank less than $\alpha$ is cornered in $\graph{G}$ by a vertex of higher rank, and thus will be cornered by this vertex in $\graph{G}^{(v)}$. If there is more than one vertex of rank $\alpha$ then they all mutually corner each other, and thus any vertex $v$ of rank $\alpha$ is a corner in the clique $\graph{G}^{(v)}$.  Thus the sequence just described is in fact a dismantling ordering.

\item[]{\bf Backward Direction.} \
If all the vertices have corner rank 1 we are done,
so assume otherwise.
We proceed by
induction on the number of vertices in \graph{G}.
By Lemma~\ref{lemma_rank_one_first}, we can rearrange the given dismantling ordering into
a dismantling ordering that has all the corner rank 1 vertices first.  
By the definition of a dismantling ordering, removing any number of initial vertices from the ordering leaves a dismantling ordering of the remaining graph. Removing
all the initial corner rank 1 vertices results in a dismantling ordering of the smaller graph.  But this smaller graph is  just 
\uprank{\graph{G}}{2}, the graph induced by all vertices of \graph{G} with corner rank 2 or higher.  By the inductive hypothesis, 
\uprank{\graph{G}}{2}
has finite corner rank.  Now consider the corner ranking procedure applied to \graph{G}: we first remove the corner rank 1 vertices, leaving us with
\uprank{\graph{G}}{2}, and since 
\uprank{\graph{G}}{2} has finite corner rank, the corner ranking procedure terminates by assigning finite corner rank to \graph{G}.

\end{itemize}
\end{proof}

\noindent
From Theorem~\ref{thm_rank_capt_time} and Lemma~\ref{lemma_finiterank_dismantling} we conclude the following.

\begin{thm} \cite{NW83, Qui78}
A graph is cop-win if and only if it has a dismantling ordering.
\end{thm}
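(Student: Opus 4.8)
The plan is to obtain this equivalence for free by composing two equivalences already in hand, so that the statement becomes a pure transitivity argument. By Theorem~\ref{thm_rank_capt_time}, a graph $\graph{G}$ is cop-win if and only if it has finite corner rank. By Lemma~\ref{lemma_finiterank_dismantling}, $\graph{G}$ has finite corner rank if and only if it admits a dismantling ordering. Chaining these, $\graph{G}$ is cop-win if and only if it has a dismantling ordering, which is exactly the claim.

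Concretely, I would argue both directions through the ``finite corner rank'' intermediary. For the forward direction, if $\graph{G}$ is cop-win then Theorem~\ref{thm_rank_capt_time} gives that its corner rank is some finite $\alpha$, and then the forward direction of Lemma~\ref{lemma_finiterank_dismantling} produces a dismantling ordering (indeed, the proof of that lemma shows one may simply list vertices in order of nondecreasing corner rank). For the converse, a dismantling ordering forces finite corner rank by the backward direction of Lemma~\ref{lemma_finiterank_dismantling}, and finite corner rank forces cop-win by Theorem~\ref{thm_rank_capt_time} (via the Lower Way strategy of Corollary~\ref{cor_capture_upper}).

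There is no real obstacle remaining at this stage: all the substantive work has been absorbed into the two cited results. The genuine difficulty lay earlier --- in showing that finite corner rank is equivalent to being cop-win (which required both the Lower Way upper bound and the Higher Way lower bound, the latter also handling the infinite-rank escape in Theorem~\ref{thm_higherway}), and in the combinatorial rearrangement arguments of Lemmas~\ref{lemma_dismantle_swap} and \ref{lemma_rank_one_first} that underpin Lemma~\ref{lemma_finiterank_dismantling}. The one point worth emphasizing, and the reason this corollary is stated separately, is that the development never invoked the dismantling characterization as an input; hence deriving it here is legitimate and non-circular, in contrast to the approach of \cite{CNG14} which takes it as a tool.
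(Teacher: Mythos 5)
Your proposal is correct and is exactly the paper's own argument: the theorem is deduced by chaining Theorem~\ref{thm_rank_capt_time} (cop-win if and only if finite corner rank) with Lemma~\ref{lemma_finiterank_dismantling} (finite corner rank if and only if a dismantling ordering exists). Your remark about non-circularity also matches the paper's explicit point that the dismantling characterization was never used as an input, so nothing is missing.
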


\section{Comparison of Cop Strategies} \label{sec_final_discussion}

This section is about relationships among different classes of optimal cop strategies. 
In the first part,  we show that  for cop-win graphs in general, 
the Catching strategies are a strict subset of the Lower Way strategies,
which are in turn, a strict subset of the set of all optimal strategies.
However,  in the second part, we describe a natural class of graphs on which, 
if the robber plays a Higher Way strategy, then
all optimal strategies are Lower Way strategies. We conclude with some questions raised by these results.

\subsection{Catching versus Lower Way versus Everything Else}

\begin{thm}\label{catnlw}  
There are graphs on which there is a Lower Way strategy
that is not a Catching strategy.
\end{thm}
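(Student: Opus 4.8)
The plan is to exhibit a concrete small graph together with an explicit cop strategy, and then verify two things: that the strategy satisfies the Lower Way condition \textbf{LW}, and that it fails to satisfy the Catching condition \textbf{C}. Recall from Lemma~\ref{catisl} that every Catching strategy is a Lower Way strategy, so the two classes are at least nested; the content of the theorem is that the inclusion is strict. The natural way to separate them is to find a position where the cop, after $t$ moves, has managed to $k$-corner the robber with $k \le \alpha - r - t$ (satisfying \textbf{LW}) but has \emph{not} $k'$-caught him for any $k' \le \alpha - r - t + 1$ (violating \textbf{C}). In other words, I want to exploit the gap between $k$-cornering and $k$-catching: Lemma~\ref{lemmaCaughtToCorner} says $k$-caught implies $(k-1)$-cornered, but the converse fails, and that failure is exactly the wiggle room I need.

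First I would search for the smallest graph where the phenomenon appears. A promising source is a $0$-cop-win graph of modest rank (say $\alpha = 3$, $r = 0$, so the capture time is $3$), since in $0$-cop-win graphs the cop cannot dominate $V(\uprank{\graph{G}}{\alpha - 1})$, which forces a genuine choice between chasing the robber's projection versus cornering it. I would then describe a cop strategy that on each move chooses, among the moves that $k$-corner the robber for the required $k$, one that does \emph{not} simultaneously $(k+1)$-catch the robber. Concretely, after one cop move the strategy should $k$-corner the robber for some $k \le \alpha - r - 1 = 2$, thereby satisfying \textbf{LW}, while at that same moment the cop is not $k'$-caught for any $k' \le \alpha - r = 3$; this single witnessing move already breaks \textbf{C}. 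I would then invoke Theorem~\ref{thm_LW_C_continuation} to confirm that the cop can go on to complete the game while maintaining \textbf{LW}, so that the described play is genuinely a (winning, optimal) Lower Way strategy and not merely a fragment.

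The main obstacle I anticipate is purely combinatorial bookkeeping rather than conceptual difficulty: I must pin down a specific graph, compute its full corner ranking and the associated projection maps $F_k$, and then trace through the definitions of $k$-cornered ($c$ corners some $r' \in F_k(r)$ in $\uprank{\graph{G}}{k} \cup c$) and $k$-caught ($c \in F_k(r)$) at the particular position to verify that one holds and the other fails. The delicate point is ensuring the witnessing move is \emph{forced} by the stated strategy (so it really is an instance of a Lower Way strategy) yet is ruled out for any Catching strategy, and that the numerics $k \le \alpha - r - t$ versus $k' \le \alpha - r - t + 1$ land on opposite sides of the threshold. Once the example is fixed, the verification is a routine check against the definitions, and the existence of the continuation follows immediately from Theorem~\ref{thm_LW_C_continuation}, so the heart of the proof is choosing the right example and computing its corner rank and projections correctly.
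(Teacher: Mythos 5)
Your proposal correctly identifies the shape of the argument — exhibit a graph and a strategy, verify condition \textbf{LW} holds while condition \textbf{C} fails at some time $t$, and then invoke Theorem~\ref{thm_LW_C_continuation} to extend the witnessing play to a complete Lower Way strategy — and this is exactly the structure of the paper's proof. But there is a genuine gap: you never produce the example. For an existence theorem of this kind, the concrete graph together with the verification of its corner ranking, its projections, and the cornered-but-not-caught position \emph{is} the entire mathematical content; everything else in your write-up is a restatement of Definition~\ref{def_lw_catching}, Lemma~\ref{catisl}, and the claim itself. Saying ``I would search for the smallest graph where the phenomenon appears'' and ``the heart of the proof is choosing the right example'' defers precisely the step that constitutes the proof. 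As it stands, nothing has been established, because it is not yet shown that any graph exhibits the separation at all.

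For comparison, the paper's proof supplies the $0$-cop-win graph of Figure~\ref{lowmid}, which has corner rank $6$: the cop starts standard at $c_1$, and no matter where the robber places himself the cop can $5$-corner him with her first move; in the specific play where the robber (following a Higher Way strategy) starts at $r_1$ and the cop moves to $c_2$, the robber is $5$-cornered (so \textbf{LW} holds, since $k=5 \le \alpha - r - t = 6-0-1$), yet the cop has not $k$-caught the robber for \emph{any} $k$, so \textbf{C} fails; Theorem~\ref{thm_LW_C_continuation} then completes the strategy. Two further cautions about your plan: first, your guess that a rank-$3$ example exists is unverified — the paper's witness has rank $6$, and you would need to check whether the phenomenon can occur at such small rank rather than assume it; second, your worry that the witnessing move must be ``forced'' by the strategy is misplaced — a strategy is simply a choice function on positions, so you are free to \emph{define} it to make that move, provided \textbf{LW} is maintained against every robber play, which is what the continuation theorem guarantees.
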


\begin{proof}

We show that in the cop-win graph in Figure~\ref{lowmid} there is a Lower Way strategy that is not a Catching Strategy.  Suppose the cop starts in a standard position at vertex $c_1$. 
Wherever the robber places himself, the cop can $5$-corner him.
Suppose the robber starts at $r_1$, following a Higher Way strategy. In her first move, suppose the cop moves to $c_2$.  Since the robber is now 5-cornered, 
the cop satisfies the {\bf LW} condition of Definition~\ref{def_lw_catching}, so by Theorem~\ref{thm_LW_C_continuation}, can continue to play Lower Way.
However the cop has not $k$-caught the robber for any $k$, and so this is not a valid move in a Catching strategy.  
\end{proof}

\noindent

In a Catching strategy, the cop successively catches the $k$-projection of the robber for smaller and smaller values of $k$, and thus occupies vertices of smaller rank as the game goes on. It might be natural to assume that when the cop plays a Lower Way strategy, the cop starts at a high rank vertex and similarly moves to reduce her rank on each move. But this need not be the case, and in fact there are graphs for which the cop can start at low-rank vertices or can occupy lower-ranked vertices than the robber while still maintaining condition {\bf LW}. For example,  consider how the Lower Way strategy might continue in the proof of Theorem~\ref{catnlw} . The robber could move to $r_2$, followed by a cop move to $c_3$, followed by a robber move to $r_3$.  At this point, the cop could move to the rank 1 vertex $c_4$ in order to $3$-corner the robber. As another example, note that in Figure~\ref{onestart}, a cop playing a Lower Way (or Catching) strategy can start at the rank 1 vertex $v$.

\begin{figure}
\begin{center}
  \begin{tikzpicture}[every node/.style={circle,fill=black!20}]  
      \node (1) at (1,5) [label=left:$c_1$]{6};
      \node (2) at (3,5) {6};
     \node (3) at (-1.5,4) {5};
     \node (4) at (-1.5,3) {4};
     \node (5) at (-1.5,2) {3};
     \node (6) at (-1.5,1) {2};
     \node (7) at (-1.5,0) {1};
     \node (8) at (1,4) [label=left:$c_2$]{5};
     \node (9) at (1,3) [label=left:$c_3$]{4};
     \node (10) at (.8,2) {3};
     \node (11) at (.8,1) {2};
     \node (12) at (.8,0) {1};
     \node (13) at (2,0)[label=below:$c_4$]{1};
     \node (14) at (3,4) [label=right:$r_1$]{5};
     \node (15) at (3,3) [label=right:$r_2$]{4};
     \node (16) at (3.2,2) [label=right:$r_3$]{3};
     \node (17) at (3.2,1) [label=right:$r_4$]{2};
     \node (18) at (3.2,0) [label=right:$r_5$]{1};
     \node (19) at (5,2) {3};
     \node (20) at (5,1) {2};
     \node (21) at (5,0) {1};
     \node (22) at (6,4) {5};
     \node (23) at (6,3) {4};
     \node (24) at (6,2) {3};
     \node (25) at (6,1) {2};
     \node (26) at (6,0) {1};
    \foreach \from/\to in {1/2, 1/3, 3/4, 4/5, 5/6, 6/7, 1/8, 8/9, 9/10, 10/11, 11/12, 9/13, 8/2, 8/14, 9/14, 9/15, 2/14, 14/15, 15/16, 16/17, 17/18, 13/15, 13/16, 15/19, 19/20, 20/21, 2/22, 22/23, 23/24, 24/25, 25/26}
    \draw (\from) -- (\to);
    \end{tikzpicture}
\caption{
A graph with a Lower Way strategy which is not a Catching strategy.  In this Lower Way strategy, the cop can move to the rank 1 vertex $c_4$ while 3-cornering the robber.
}\label{lowmid}
\end{center}
\end{figure}

\begin{figure}
\begin{center}
  \begin{tikzpicture}[every node/.style={circle,fill=black!20}]  
      \node (n11) at (-2,-1) {$v$};
      \node(a21) at (1, -1) {};
      \node (n21) at (1,0)  {};
      \node(a31) at (2, -1) {};
   \node (n31) at (2,0)   {};
      \node(a41) at (3, -1) {};
      \node (n41) at (3,0)  {};
      \node (n12) at (1,1)   {};
      \node (n22) at (2,1.2) {};
      \node (n32) at (3,1) {};
    \foreach \from/\to in {n11/n12, n11/n22, n11/n32, n12/n22, n12/n32, n22/n32, n21/n12, n31/n22, n41/n32, n21/a21, n31/a31, n41/a41}
    \draw (\from) -- (\to);
    \end{tikzpicture}
\end{center}
\caption{A cop-win graph with a standard starting position of rank one (vertex $v$).}\label{onestart}
\end{figure}

We have shown that every Lower Way strategy is optimal.  However the converse is not true.

\begin{thm}
There are cop-win graphs with optimal strategies that are not Lower Way.
\end{thm}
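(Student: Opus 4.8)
The plan is to exhibit an explicit cop-win graph $\graph{G}$ together with a concrete cop strategy that wins in $\capt(\graph{G})$ moves against every robber, yet fails the definition of a Lower Way strategy at a single turn. The conceptual point driving the construction is that the projection-cornering bound is only \emph{sufficient} for a timely capture, not necessary: Lemma~\ref{lemma_cornertocatch} and Corollary~\ref{cor_corner_reduction} guarantee that $k$-cornering after $t$ moves lets the cop finish within $k$ further moves, but a cop may be able to finish just as quickly from a position that $k$-corners the robber only for values of $k$ too large to meet condition \textbf{LW}. I would engineer a position in which this gap is realized.

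First I would fix the graph. I would take one in which the robber is funneled down a forced descent of strict corners (the graphs of Figure~\ref{lowmid} and Figure~\ref{cranex1} already have this shape), and attach near the bottom a small \emph{slack gadget}: a rank-$1$ vertex $r_0$ strictly cornered by a single rank-$(\ge 2)$ vertex $a$, where $a$ has a further neighbor $b$ of rank $\ge 2$, together with a cop vertex $c_0$ adjacent to $a$ but adjacent to neither $r_0$ nor $b$. Next I would run the corner ranking procedure (Definition~\ref{def_corner_ranking}) to read off $\alpha$ and, via Definition~\ref{defn_topheavy}, the value $r$, so that $\capt(\graph{G}) = \alpha - r$ by Theorem~\ref{thm_rank_capt_time}. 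I would then define the cop strategy to agree with a Lower Way strategy everywhere \emph{except} when, after $t = \alpha - r - 2$ cop moves, the robber sits at $r_0$ with the cop at $c_0$; in that one case the cop moves $c_0 \to a$. Since $a$ corners $r_0$, whatever the robber does next the cop captures on the following move (Lemma~\ref{lemma_cornertocatch} with $k=1$), so this line ends after exactly $(\alpha - r - 2) + 2 = \alpha - r$ cop moves.

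To finish I would verify the two required properties. For optimality it suffices, by the lower bound of Corollary~\ref{lemma_capture_lower}, to check that the strategy wins in at most $\alpha - r$ cop moves against \emph{every} robber; on all lines other than the deviating one the cop plays Lower Way and finishes on schedule by Theorem~\ref{thm_cop_strat}, and the deviating line was just shown to finish on schedule as well. For the failure of the Lower Way property I would show that at the deviating turn $t$ the cop does \emph{not} $k$-corner the robber for any $k \le \alpha - r - t = 2$: she does not $1$-corner $r_0$ because $c_0$ is not adjacent to $r_0$, and she does not $2$-corner $r_0$ because the only $2$-projection of $r_0$ is $a$, while $c_0$ fails to corner $a$ in $\uprank{\graph{G}}{2} \cup c_0$ on account of the neighbor $b$. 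Hence condition \textbf{LW} of Definition~\ref{def_lw_catching} is violated, so the strategy is optimal but not Lower Way. The main obstacle is the tension between these two requirements: violating \textbf{LW} forces the cop into a position that, measured by projections, looks behind schedule, while optimality demands she still finish on time against all robber play. The delicate step is to confirm that the projection accounting is genuinely pessimistic at $r_0$ — because $r_0$ is a rank-$1$ strict corner whose high-rank projection $a$ overstates the robber's real mobility — and to ensure the slack gadget creates no alternative robber escape that would push some other line past $\alpha - r$ moves.
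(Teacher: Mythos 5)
There is a genuine gap, and it is fatal to the construction as described. Condition \textbf{LW} of Definition~\ref{def_lw_catching} is a condition on the positions that actually arise when the cop follows her strategy against some robber play. Your deviating position --- cop at $c_0$, robber at $r_0$, after $t = \alpha - r - 2$ cop moves, with the cop \emph{not} $k$-cornering the robber for any $k \le 2$ --- can never arise under the strategy you define. By stipulation, on every move up to and including move $t$ your cop's moves agree with a Lower Way strategy, and Lower Way play guarantees (this is exactly condition \textbf{LW}, maintained via Theorem~\ref{thm_LW_C_continuation}) that after $t$ moves the cop $k$-corners the robber for some $k \le \alpha - r - t = 2$ --- which is precisely what your slack gadget was engineered to prevent at that position. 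So either the position is unreachable, in which case your strategy coincides with Lower Way play on every actual line and therefore \emph{is} a Lower Way strategy (it starts standard and satisfies \textbf{LW} in every play), or else reaching it requires the cop to have already violated \textbf{LW} on an earlier move, contradicting your stipulation that she agrees with Lower Way until the deviation. In both cases you have not exhibited an optimal strategy that fails to be Lower Way; note also that your deviating move $c_0 \to a$ itself $1$-corners the robber and hence satisfies the \textbf{LW} bound at time $t+1$, so even that move is unobjectionable. The underlying obstruction is that you cannot graft a ``behind-schedule'' position onto an otherwise Lower Way strategy: the violation has to be realized on a reachable play, which forces the cop to genuinely fall behind the projection accounting at some reachable moment and then recover.

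The paper resolves exactly this tension by putting the violation where it is trivially realizable: at the initial placement. In the graph of Figure~\ref{nlw}, the cop starts at the rank-$1$ vertex $x$; this start is not standard, and after her first move (to $v_{13}$ or $v_{14}$, chosen according to the parity of the robber's starting index) the robber need not be $k$-cornered for the $k$ required by \textbf{LW}, so the strategy is non-Lower-Way on every actual play. Optimality is then recovered not by a local gadget but by the global structure of that graph: the robber's first move commits him to a parity, the cop responds by moving to $v_3$ or $v_4$ accordingly, and she wins within $3$ further moves, for a total of $5 = \capt(\graph{G})$. If you want to salvage your approach, you would need to specify a concrete graph, let the cop deviate at a \emph{reachable} position (most easily the start), and then verify optimality against \emph{all} robber responses from that deviation onward --- which is exactly the analysis the paper's example carries out and your proposal leaves unaddressed.
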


\begin{proof}
Figure~\ref{nlw} shows a $0$-cop-win graph of rank 5, which by Theorem~\ref{thm_rank_capt_time} has capture time 5.  In addition to the optimal Lower Way strategies, there is an optimal strategy for this graph where the cop starts at the rank 1 vertex labeled $x$. No matter where the robber starts, he will not be $k$-cornered for any $k$, but we will show that the cop has an optimal strategy nonetheless. If the robber initially chooses $v_i$ where $i$ is even, the cop should move to $v_{14}$ and otherwise to $v_{13}$.  Note that an optimal robber is still not $4$-cornered, (e.g. a robber playing a Higher Way strategy would have started at $v_3$, $v_4$, $v_5$, or $v_6$) so condition {\bf LW} is not met. On the robber's next move, he must move to a $v_i$ for some value of $i$ greater than 4.  If the robber is at a vertex $v_i$ where $i$ is even, the cop should move to $v_4$, and otherwise the cop should move to $v_3$.  From there, the cop can force a win within 3 more moves, for a total of five, and thus we have described an optimal strategy that is not Lower Way.
\end{proof}

\begin{figure}
\begin{center}
  \begin{tikzpicture}[every node/.style={circle,fill=black!20}, scale=1.3]  
      \node (1) at (1,4) [label=above:$v_1$]{5};
      \node (2) at (2,4)  [label=above:$v_2$]{5};
   \node (3) at (0,3)   [label=right:$v_3$]{4};
   \node (4) at (-1,3)   [label=left:$v_5$]{4};
      \node (5) at (4,3) [label=right:$v_6$]{4};
      \node (6) at (3,3)   [label=left:$v_4$]{4};
      \node (7) at (-1,2) [label=right:$v_7$]{3};
      \node (8) at (4,2) [label=left:$v_8$]{3};
      \node (9) at (-1,1) [label=left:$v_{9}$]{2};
      \node (10) at (-3,1) [label=left:$v_{11}$]{2};
      \node (11) at (1,1) [label=left:$v_{13}$]{2};
      \node (12) at (2,1) [label=right:$v_{14}$]{2};
      \node (13) at (6,1) [label=right:$v_{12}$]{2};
      \node (14) at (4,1) [label=right:$v_{10}$]{2};
      \node (15) at (-1,0) [label=left:$v_{15}$]{1};
      \node (16) at (-3,0) [label=left:$v_{17}$]{1};
      \node (17) at (1.5,0) [label=left:$x$]{1};
      \node (18) at (6,0) [label=right:$v_{18}$]{1};
      \node (19) at (4,0) [label=right:$v_{16}$]{1};
    \foreach \from/\to in {1/2, 1/3, 1/4, 1/11, 1/12, 2/5, 2/6, 2/11, 2/12, 3/4, 3/7, 3/11, 4/7, 4/10, 5/6, 5/8, 5/13, 6/8, 6/12, 7/9, 8/14, 9/15, 10/16, 11/12, 11/17, 12/17, 13/18, 14/19}
    \draw (\from) -- (\to);
    \end{tikzpicture}
\caption{
A graph with an optimal cop strategy that is not Lower Way.
}\label{nlw}
\end{center}
\end{figure}

\subsection{Iteratively Twin-Free Graphs}

We describe a class of cop-win graphs on which all optimal cop strategies are
Lower Way strategies, when the robber plays a Higher Way strategy.  Further, on such graphs
no matter what strategy the robber plays, an optimal cop must strategy must begin with a standard initial placement.

\begin{defn}
Consider $k \ge 2$ and a graph \graph{G}. We say that two vertices $x$ and $y$ are \dword{k-twins} if they both have rank $k$ and are twins in 
\uprank{\graph{G}}{k - 1}.
\end{defn}

\noindent
For example, in the graph in Figure~\ref{nlw}, $v_4$ and $v_6$ are 4-twins, since they both have rank 4 and are twins in $G_3$. 

\begin{defn}
We say that a graph  is 
\dword{iteratively twin-free} if it has finite corner rank $\alpha$, and
there are no $k$-twins for $2 \le k \le \alpha$. 
\end{defn}

\noindent
In other words, a graph is \emph{iteratively twin-free} if each step of the corner ranking procedure (i.e. removing all the current strict corners), leaves a graph with no twins (except possibly at the current lowest rank).
For example, the graph in figure~\ref{nlw} is \emph{not} iteratively twin-free because after all the rank 1 and rank 2 vertices are removed, $v_4$ and $v_6$ are twins that are not at the lowest rank.
Such graphs fit naturally with the definition of corner rank.  They include the natural and more restrictive class of graphs:
\begin{quote}
Those finite rank graphs \graph{G} such that none of the graphs produced in the corner ranking procedure 
(i.e. 
$\uprank{\graph{G}}{1}, \ldots, \uprank{\graph{G}}{\alpha}$)
have twins, except possibly \uprank{\graph{G}}{\alpha}.
\end{quote}
Twins cause complications in some of the development, so avoiding them is interesting.  In fact, consider the similar ranking procedure in \cite{CNG14}:
In each step, to go from 
\uprank{\graph{G}}{k} to \uprank{\graph{G}}{k + 1}, involves first collapsing twins, and then removing all corners.  On this restrictive class of graphs, the procedure of \cite{CNG14} is simplified to just removing all corners (without concern for strict versus non-strict) at each step.

The following lemma mirrors Lemma~\ref{lemma_robber_response}. However in this version, the fact that the graph is iteratively twin-free allows the robber to play against the cop rather than the projection of the cop, moving to $k$-safe vertices rather than $k$-proj-safe vertices.

\begin{lemma} \label{lemma_twinfree_robber_response}
Suppose \graph{G} is an iteratively twin-free graph, the cop is at some vertex $c_0$, and the robber is at some $k$-safe vertex where $k \ge 2$. Suppose the cop moves from $c_0$ to $c_1$.

\begin{enumerate}

\item
The robber has a move to a $(k-1)$-safe vertex.

\item 
Furthermore,
if the robber is \emph{not} cornered by $c_1$ in 
$\uprank{\graph{G}}{k} \cup c_1$, then the robber has a move to a $k$-safe vertex.

\end{enumerate}

\end{lemma}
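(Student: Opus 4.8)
The plan is to mirror the proof of Lemma~\ref{lemma_robber_response}, letting iterative twin-freeness play the role that the proj-safe witness played there. I first dispose of part (2), which is the easy half and needs no projections. The hypothesis that the robber at $r_0$ is \emph{not} cornered by $c_1$ in $\uprank{\graph{G}}{k}\cup c_1$ says exactly that $c_1$ fails to dominate the closed neighborhood of $r_0$ inside $\uprank{\graph{G}}{k}$; that is, there is some $s\in\clnbk{k}[r_0]$ not adjacent to $c_1$. Since $s$ is adjacent to $r_0$, has rank at least $k$, and is not adjacent to $c_1$, moving to $s$ is a $k$-safe move.

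For part (1) I would argue by contradiction: assume the robber at $r_0$ has no $(k-1)$-safe move. Unwinding the definitions, every rank-$\ge k-1$ neighbor of $r_0$ is then adjacent to $c_1$, i.e. $\clnbk{k-1}[r_0]\subseteq \clnb{c_1}$, which is precisely the assertion that $c_1$ corners $r_0$ in $\uprank{\graph{G}}{k-1}\cup c_1$. The goal is to convert this cornering into a twin, which the hypothesis forbids. Fix any $c_1'\in F_{k-1}(c_1)$. Since $\clnbk{k-1}[r_0]$ consists of rank-$\ge k-1$ vertices, $\clnbk{k-1}[r_0]\subseteq \clnbk{k-1}[c_1]$, and Lemma~\ref{projnbr} gives $\clnbk{k-1}[c_1]\subseteq \clnbk{k-1}[c_1']$; hence $c_1'$ corners $r_0$ in $\uprank{\graph{G}}{k-1}$. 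Because $\rfunct{r_0}\ge k>k-1$, the vertex $r_0$ is not a strict corner in $\uprank{\graph{G}}{k-1}$, so this cornering is not strict and $c_1'$ and $r_0$ are twins in $\uprank{\graph{G}}{k-1}$. Such twins share a rank $m\ge k$ and stay twins in $\uprank{\graph{G}}{m-1}$, so if $c_1'\ne r_0$ they are $m$-twins, contradicting iterative twin-freeness. Thus every projection equals $r_0$, i.e. $F_{k-1}(c_1)=\{r_0\}$.

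It remains to derive a contradiction from $F_{k-1}(c_1)=\{r_0\}$ together with the unused data $c_0$ adjacent to $c_1$ and $c_0$ not adjacent to $r_0$. Since $F_{k-1}$ is a homomorphism, every element of $F_{k-1}(c_0)$ is adjacent to the unique element $r_0$ of $F_{k-1}(c_1)$. I want to promote this to ``$c_0$ itself is adjacent to $r_0$,'' contradicting $k$-safety. When $\rfunct{c_0}\ge k-1$ this is immediate, since then $F_{k-1}(c_0)=\{c_0\}$. The residual case, where $c_0$ (and possibly $c_1$) lies below rank $k-1$, is the crux: the projection controls only the rank-$\ge k-1$ neighbors of $c_1$, so a low-rank neighbor such as $c_0$ is not obviously pinned to $r_0$'s neighborhood. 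I would attempt to close this by descending the strict-cornering chain realizing $F_{k-1}(c_1)=\{r_0\}$ and proving, by induction on $\rfunct{c_1}$, that $\clnb{c_1}\subseteq\clnb{r_0}$, whence $c_0\in\clnb{c_1}\subseteq\clnb{r_0}$; the base case $\rfunct{c_1}=1$ is clean because there $f_1$ is computed in the full graph, and iterative twin-freeness should be exactly what keeps the chain from collapsing at the higher ranks.

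This final extraction is the main obstacle. Everything preceding it is a routine transcription of the projection argument of Lemma~\ref{lemma_robber_response}, with twin-freeness supplying the contradiction that the proj-safe witness supplied there. The genuinely delicate point is that the contradiction must come from the \emph{actual} previous cop vertex $c_0$, not from some chosen projection of it, so I must rule out a configuration in which $c_1$ projects onto $r_0$ yet retains a neighbor (namely $c_0$) escaping $\clnb{r_0}$. I expect that pinning down how iterative twin-freeness forbids a strictly lower-rank vertex from sharing the full high neighborhood of the rank-$k$ vertex $r_0$ is where the real content of the lemma resides, and I would verify that step with particular care.
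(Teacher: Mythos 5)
Your part (2) and the main line of your part (1) coincide exactly with the paper's own proof: assume no $(k-1)$-safe move, conclude that $c_1$ corners $r_0$ in $\uprank{\graph{G}}{k-1}\cup c_1$, transfer the cornering to an arbitrary $c_1'\in F_{k-1}(c_1)$ via Lemma~\ref{projnbr}, rule out strictness because $\rfunct{r_0}\ge k$, and convert the resulting twins into forbidden $k^*$-twins. The one thing the paper does \emph{not} do is check that $c_1'\ne r_0$: cornering and twinhood are defined only for distinct vertices, and the paper simply asserts ``$c_1'$ and $r$ must be twins.'' So the residual case you isolate, $F_{k-1}(c_1)=\{r_0\}$, is precisely the case the paper's proof silently skips. (In Lemma~\ref{lemma_robber_response} the analogous degeneracy is harmless, because there the contradiction is adjacency of $c_1'$ to $c_0'$, which bites even when $c_1'=r_0$; here the contradiction requires two distinct vertices, so the case genuinely matters.)

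However, your proposed patch (deriving $\clnb{c_1}\subseteq\clnb{r_0}$ by induction along the strict-cornering chain) cannot be completed, because the residual configuration is realizable and the lemma actually fails there. Take \graph{G} with vertices $r,s,a,b,c_1,w,u_a,u_b,c_0,p_w,p_a,p_b$ and edges $rs$, $ra$, $sb$, $c_1r$, $c_1s$, $c_1a$, $rw$, $au_a$, $bu_b$, $c_1c_0$, $wp_w$, $u_ap_a$, $u_bp_b$. One checks the corner ranking: the strict corners of \graph{G} are exactly the pendants $c_0,p_w,p_a,p_b$ (rank $1$); the strict corners of \uprank{\graph{G}}{2} are exactly $c_1,w,u_a,u_b$ (rank $2$; $r$ strictly corners $c_1$ and $w$, $a$ strictly corners $u_a$, $b$ strictly corners $u_b$); the strict corners of \uprank{\graph{G}}{3} are exactly $a,b$ (rank $3$, cornered by $r$ and $s$ respectively); and $\uprank{\graph{G}}{4}$ is the clique $\{r,s\}$ (rank $4$). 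There are no $2$-twins (the four rank-$2$ vertices have pairwise distinct closed neighborhoods in \graph{G}), no $3$-twins ($r\in\clnbk{2}[a]$ but $r\notin\clnbk{2}[b]$), and no $4$-twins ($a\in\clnbk{3}[r]$ but $a\notin\clnbk{3}[s]$), so \graph{G} is iteratively twin-free. Place the cop at $c_0$ and the robber at $r$: since $\rfunct{r}=4$ and $c_0$ is not adjacent to $r$, the robber is $4$-safe, so the hypotheses hold with $k=4$. Let the cop move to $c_1$. The rank-$\ge 3$ vertices in $\clnb{r}$ are $r$, $s$, $a$, and all three are adjacent to $c_1$, so the robber has \emph{no} $3$-safe move: part (1) is false here. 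This is exactly your residual case, since $F_3(c_1)=f_2(c_1)=\{r\}$ ($s$ and $a$ each fail to corner $c_1$ in \uprank{\graph{G}}{2} because $s\not\sim a$, and $b\not\sim c_1$); and the inclusion you hoped to prove is also false, since $c_0\in\clnb{c_1}\setminus\clnb{r}$. So your instinct about where the real content lies was correct, but no argument can close that case: the lemma as stated is wrong, and the paper's proof breaks at precisely the step you refused to wave through. A corrected statement needs an extra hypothesis such as $r_0\notin F_{k-1}(c_1)$ (automatic when $\rfunct{c_1}\ge k-1$), and the results that invoke this lemma, in particular Theorem~\ref{thm_optimal_start}, inherit the problem: in the example above their conclusions survive only because the robber can start at $b$ rather than at the $\alpha$-safe vertex $r$, a distinction their proofs never make.
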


\begin{proof}  

Since the robber was $k$-safe, the cop move to $c_1$ did not catch him.
The second part follows immediately from the definition of not being cornered.
Now consider the first part.
  Suppose the robber is at a vertex $r$ of rank $k^* \ge k$.
Assume for contradiction there is no $(k-1)$-safe move for the robber. Then $c_1$ corners $r$ in 
$\uprank{\graph{G}}{k-1} \cup c_1$. 
Let $c_1' \in F_{k-1}(c_1)$. Lemma~\ref{projnbr} implies that $\clnbk{k-1}[c_1] \subseteq \clnbk{k-1}[c_1']$, and thus $c_1'$ also corners $r$ in 
\uprank{\graph{G}}{k-1}.
Since $r$ has rank $k^* \ge k$, this cornering cannot be strict, so $c_1'$ and $r$ must  be twins in 
\uprank{\graph{G}}{k-1}.
However since  $r$ has rank $k^*$, $c_1'$ also has rank $k^*$, so $r$ and $c_1'$ are both of rank $k^* \ge k$ and twins in 
\uprank{\graph{G}}{k^*-1}, i.e. $r$ and $c_1'$ are $k^*$-twins,  contradicting the assumption that \graph{G} is iteratively twin-free.
\end{proof}

We already know that the standard start positions can always be followed by optimal cop play.  For iteratively twin-free graphs we get the converse: to be optimal the cop must start standard.
\begin{thm} \label{thm_optimal_start}
In an iteratively twin-free cop-win graph the initial placement by the cop is optimal if and only if it is standard.
\end{thm}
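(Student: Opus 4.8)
The plan is to prove the two directions separately, and I expect the forward direction (standard $\Rightarrow$ optimal) to follow almost immediately from the machinery already built, while the backward direction (optimal $\Rightarrow$ standard) carries the real content and is where iterative twin-freeness is essential. Throughout, let $\alpha$ be the finite corner rank and $r \in \{0,1\}$, so that by Theorem~\ref{thm_rank_capt_time} the capture time is $\alpha - r$; I read ``the placement is optimal'' as ``the cop can guarantee a win in at most $\alpha - r$ moves from that placement.''

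For the forward direction I would show that from any standard placement the cop can run a Catching strategy. If the cop's placement dominates $V(\uprank{\graph{G}}{\alpha - r})$ and the robber places at $r_0$, then since $F_{\alpha - r}(r_0)$ is a nonempty subset of $V(\uprank{\graph{G}}{\alpha - r})$, the cop is adjacent to all of it and may step onto some vertex of $F_{\alpha-r}(r_0)$, thereby $(\alpha-r)$-catching the robber and satisfying condition {\bf C} at $t = 1$. Theorem~\ref{thm_LW_C_continuation} then lets her maintain {\bf C}, and Theorem~\ref{thm_cop_strat} gives a win in at most $\alpha - r$ moves; hence the placement is optimal. This direction uses no hypothesis about twins.

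For the backward direction I would prove the contrapositive: a non-standard placement $c$ is not optimal. Non-standard means $c$ does not dominate $V(\uprank{\graph{G}}{\alpha - r})$, so some vertex $v$ of rank at least $\alpha - r$ is not adjacent to $c$; that is, $v$ is $(\alpha-r)$-safe, and the robber starts there. The robber then descends the ranks: whenever he sits at a $k$-safe vertex with $k \ge 2$ and the cop moves, Lemma~\ref{lemma_twinfree_robber_response} supplies a move to a $(k-1)$-safe vertex. By induction the robber is at least $(\alpha - r - t)$-safe after $t$ cop moves, so after $\alpha - r - 1$ cop moves he occupies a $1$-safe (hence non-adjacent) vertex. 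As in the endgame of Theorem~\ref{thm_higherway}, non-adjacency forces at least two further cop moves, for a total of at least $(\alpha - r - 1) + 2 = \alpha - r + 1 > \alpha - r$ cop moves; so the cop cannot achieve the capture time from $c$, and $c$ is not optimal.

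The main obstacle, and the place where the hypothesis genuinely enters, is that the high-rank vertex handed to us by non-standardness is only guaranteed $(\alpha-r)$-safe, not $(\alpha-r)$-proj-safe, so the general Higher Way response lemma (Lemma~\ref{lemma_robber_response}) does not apply and I must instead invoke Lemma~\ref{lemma_twinfree_robber_response}, which trades proj-safety for plain safety precisely when the graph is iteratively twin-free. I would also handle the boundary cases with care: by the standing convention $\alpha \ge 2$, and when $\alpha - r = 1$ (the $1$-cop-win, rank-$2$ case) the robber starts $1$-safe with no descent steps needed, still yielding at least two cop moves as required.
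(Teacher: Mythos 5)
Your proposal is correct and follows essentially the same route as the paper: the standard-implies-optimal direction comes from the existing Lower Way/Catching machinery (Theorems~\ref{thm_LW_C_continuation} and \ref{thm_cop_strat}), and the optimal-implies-standard direction is proved contrapositively by letting the robber start $(\alpha-r)$-safe and descend one rank per turn via Lemma~\ref{lemma_twinfree_robber_response}, forcing at least $\alpha-r+1$ cop moves. Your write-up merely makes explicit two details the paper leaves implicit (the Catching argument from an arbitrary standard placement, and the $(\alpha-r-1)+2$ move count), which is fine.
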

\begin{proof}
The backwards direction follows from the existence of Lower Way strategies, which are optimal and whose initial placement can be anything that is standard. Consider the forward direction.  Suppose the cop does not start standard.  In the $0$-cop-win case, the robber can start $\alpha$-safe, and in the $1$-cop-win case, the robber can start $(\alpha-1)$-safe.  Then by Lemma~\ref{lemma_twinfree_robber_response}, the robber can play to reduce his rank at most 1 per turn; in either case, this prolongs the game at least one cop move more than the optimal capture time.

\end{proof}

\noindent
On iteratively twin-free graphs, we classify all optimal cop strategies given the assumption that the robber plays a Higher Way strategy.

\begin{thm}
Suppose the robber plays a Higher Way strategy
on an iteratively twin-free graph.  Then a cop strategy is optimal if and only if it is Lower Way.

\end{thm}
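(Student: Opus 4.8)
The plan is to prove the two directions separately, leaning on the two notions of safety already developed. The backward direction is essentially free: by Theorem~\ref{lwhwopt} every Lower Way strategy is optimal, and this holds regardless of how the robber plays, so a Lower Way cop strategy is in particular optimal against a Higher Way robber. All the work is in the forward direction, where I must show that an optimal cop, facing a Higher Way robber, produces a play that witnesses the standard start and the \textbf{LW} condition of Definition~\ref{def_lw_catching}. I would first dispose of the initial placement: the opening move of an optimal strategy is itself an optimal initial placement, so by Theorem~\ref{thm_optimal_start} (which is exactly where iterative twin-freeness enters at the start) it must be standard.

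Next I would track the rank at which the robber sits as the game proceeds. Because the robber plays Higher Way and $\graph{G}$ has corner rank $\alpha$, the analysis behind Theorem~\ref{thm_higherway} shows that the robber starts proj-safe at rank $\alpha - 1$ (in the $0$-cop-win case) or $\alpha - 2$ (in the $1$-cop-win case), and that by Lemma~\ref{lemma_robber_response} his proj-safe rank can drop by at most one per cop move. Since an optimal cop wins against a Higher Way robber in exactly $\capt(\graph{G}) = \alpha - r$ moves (the lower bound of Corollary~\ref{lemma_capture_lower} being tight), the robber's proj-safe rank must in fact fall by \emph{exactly} one on each move, decreasing from $\alpha - r - 1$ down to $1$ and then requiring two final cop moves to effect the capture. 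Thus after $t$ cop moves the robber occupies a vertex that was proj-safe at rank $\alpha - r - t$ against the previous cop position, and --- the crucial point --- on the cop's $t$-th move the robber is \emph{unable} to retain that rank.

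The heart of the proof is to convert this inability to retain rank into the statement that the cop $k$-corners the robber with $k = \alpha - r - t$, which meets the bound $k \le \alpha - r - t$ of the \textbf{LW} condition. Here I would invoke Lemma~\ref{lemma_twinfree_robber_response}, the iteratively-twin-free analogue of Lemma~\ref{lemma_robber_response}: its second part says that if the robber, sitting at a $k$-safe vertex, were \emph{not} cornered by the new cop position $c_t$ in $\uprank{\graph{G}}{k} \cup c_t$, then he would have a move keeping him $k$-safe. Since a vertex of rank at least $k$ has $F_k$-image equal to itself, the phrase ``$c_t$ corners the robber in $\uprank{\graph{G}}{k} \cup c_t$'' is precisely ``$c_t$ has $k$-cornered the robber,'' so the contrapositive of that lemma delivers exactly the required cornering.

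The main obstacle --- and the reason iterative twin-freeness is indispensable --- is the gap between the \emph{proj-safe} notion in which Higher Way is defined and the \emph{safe} notion that Lemma~\ref{lemma_twinfree_robber_response} speaks about. Optimality tells me the robber cannot find a $k$-\emph{proj}-safe move, whereas the lemma needs that he cannot find a $k$-safe move; I must show that on an iteratively twin-free graph these failures coincide (equivalently, that the maximum safe rank equals the maximum proj-safe rank against any cop position). This is where twin-freeness rules out the ``twin escape'' that on a general graph would let a \emph{projection} of the cop corner the robber while the cop herself does not --- the direction Lemma~\ref{projnbr} does not supply for free --- and it is precisely the step that guarantees the cornering which emerges is by the cop herself, as the $k$-cornering definition demands. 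Establishing this alignment, together with the routine bookkeeping of the two endgame moves at rank $1$ and the separate accounting for the $0$- and $1$-cop-win starting ranks, completes the argument.
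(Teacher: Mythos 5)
Your backward direction, your use of Theorem~\ref{thm_optimal_start} for the initial placement, and your observation that optimality forces the Higher Way robber's proj-safe rank to drop by exactly one per cop move are all correct and consistent with the paper. The genuine gap is at the step you yourself flag as crucial: the claim that on an iteratively twin-free graph ``the maximum safe rank equals the maximum proj-safe rank against any cop position'' (equivalently, that having no $k$-proj-safe move forces the cop to $k$-corner the robber). This is false. Iterative twin-freeness only forbids a projection of the cop from being \emph{twins} with the robber --- it kills non-strict cornering by projections, which is all that the proof of Lemma~\ref{lemma_twinfree_robber_response} uses --- but it does not prevent a projection from being adjacent to, or even strictly cornering, the robber while the cop herself is not adjacent to him. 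Concretely, the path $P_5$ on $v_1,\dots,v_5$ is iteratively twin-free with corner ranks $1,2,3,2,1$; put the cop at $v_1$ and the robber at $v_3$. Then $F_2(v_1)=\{v_2\}$ and $F_3(v_1)=\{v_3\}$, so the robber is $3$-safe but only $1$-proj-safe; he has no $3$-proj-safe move, yet the cop does not $3$-corner him, since she is not even adjacent to $v_3$. So the implication ``no $k$-proj-safe move $\Rightarrow$ cop $k$-corners the robber'' fails, and your chain from exact proj-safe descent to the {\bf LW} condition breaks. Nor can the lemma be rescued by restricting to positions reachable in actual play: one can build iteratively twin-free graphs in which a genuine Higher Way robber, facing a cop who has deviated (for instance by a non-standard start), arrives at exactly such a configuration --- $k$-safe moves available, no $k$-proj-safe move, and no $k$-cornering by the cop.

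The paper's proof never needs this bridge, and the reason is worth internalizing: optimality is defined against \emph{all} robber strategies, not only Higher Way ones. So when the cop's $t$-th move fails {\bf LW}, the paper does not ask what the Higher Way robber does next; it only needs to exhibit \emph{some} robber continuation that survives too long. The Higher Way play up to that point guarantees the robber is (proj-safe, hence) safe at rank at least $\alpha - r - t$; Lemma~\ref{lemma_twinfree_robber_response} (Part 2) then supplies a move that remains $(\alpha-r-t)$-safe, and repeated use of Part 1 keeps this continuation (which is no longer Higher Way, and need not be) at least $1$-safe for $\alpha-r-t-1$ further cop moves, forcing at least $\alpha - r + 1$ cop moves in total and contradicting optimality. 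This is precisely why Lemma~\ref{lemma_twinfree_robber_response} is stated in terms of $k$-\emph{safe} rather than $k$-proj-safe vertices: after the cop's deviation the argument abandons the Higher Way robber and uses a safety-maximizing witness of non-optimality. That switch of robber is the idea your proposal is missing, and without it the safe/proj-safe mismatch you correctly identified cannot be closed.
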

\begin{proof}

The backward direction was noted in Theorem~\ref{lwhwopt}.  Consider the forward direction, just for $0$-cop-win graphs
(the proof for $1$-cop-win graphs is virtually identical).
By Theorem~\ref{thm_optimal_start}, the cop must start standard, consistent with a Lower Way strategy.
Suppose the graph has finite corner rank $\alpha$.
By Lemma~\ref{lemma_robber_start} (recalling that being $k$-proj-safe safe implies being $k$-safe), the robber will start at an $(\alpha-1)$-safe vertex. By Lemma~\ref{lemma_twinfree_robber_response} (Part 1) the robber will move down at most one rank per a turn.  Thus right before the cop makes her $t^{th}$ move ($t \ge 1$), the robber will be at some rank $\ge \alpha - t$.  At this point, if the cop does not follow a Lower Way strategy and corner the robber in 
\uprank{\graph{G}}{\alpha - t}, then by Lemma~\ref{lemma_twinfree_robber_response} (Part 2) the robber can safely remain at rank $\ge \alpha - t$.  But now the cop has made $t$ moves and by repeated application of Lemma~\ref{lemma_twinfree_robber_response} (Part 1), after $(\alpha - t - 1)$ more cop moves, the robber will be $1$-safe, i.e. not adjacent to the cop, thus requiring at least 2 more cop moves for the cop to catch the robber.  Thus by \emph{not} following a Lower Way strategy, the cop was forced to make at least $t + (\alpha - t - 1) + 2 = \alpha + 1$ moves, and the strategy is not optimal.
\end{proof}

\subsection{Conclusions}

We conclude this paper with a somewhat vague research proposal:
\begin{quote}
\emph{Find a ``nice'' characterization of the optimal strategies for the cop and
for the robber.}  
\end{quote}

\noindent
The Catching strategies are a nice description of a number of optimal cop strategies, 
but not all of them.  The Lower Way strategies are a nice description of 
a wider class of cop strategies, but still not all of them.
So we have nice descriptions which are not comprehensive.

On the other hand, we can give comprehensive descriptions which are not nice.  Hahn and MacGillivary \cite{HG2006} define a rank function (right before their Lemma 4) on the collection of
all possible game positions (i.e. the pairs consisting of the cop location and the robber location); this rank builds
on the ``$\le_{\gamma}$-relation'' from \cite{NW83}. 
They show that the rank of a position is equal to
the number of cop moves required to win from that position.
Thus, we could use their rank function to describe a class of cop strategies that is comprehensive:
\emph{The cop starts at a location that minimizes the rank over all possible robber responses; then on subsequent moves the cop chooses to reduce the rank.}  An analogous approach would work for the robber strategies.
However we would not 
call such a description ``nice'' because it does
not give much insight into the specific game of Cops and Robber.
The basic point is that most any game we care about
can be described by a similar rank function and a similar class of rank-reducing optimal strategies.

Thus to be considered a nice strategy characterization, we want the description to be  tied to the specific structure of the game of Cops and Robber.  The notion of corner rank is tied to the specific structure of the game, which is why we find the Catching and Lower Way strategies to be nice.  Thus, we can restate our proposal:
\begin{quote}
\emph{Find a ``nice'' characterization, in terms of corner rank, of the optimal strategies for the cop and
for the robber.}  
\end{quote}

\noindent
Our hope would be to extend the definitions of the Lower Way strategies and Higher Way strategies in some nice way, 
so that they encompassed all the optimal strategies.

\section{Acknowledgments}
The first author was supported by the Westminster College McCandless Research Award.
The second author was supported by a PSC-CUNY Research Award (Traditional A).

\end{document}